\renewcommand{\theequation}{\thesection.\arabic{equation}}
\newtheorem{thm}{Theorem}[section]
\newtheorem{cor}[thm]{Corollary}
\newtheorem{rmk}[thm]{Remark}
\newcommand{\norm}[1]{\left\Vert#1\right\Vert}
\newcommand{\eps}{\varepsilon}
\renewcommand\appendix{\par
  \setcounter{section}{0}
  \setcounter{subsection}{0}
  \setcounter{figure}{0}
  \setcounter{table}{0}
  \renewcommand\thesection{Appendix \Alph{section}}
  \renewcommand\theequation{\Alph{section}.\arabic{equation}}
  \renewcommand\thefigure{\Alph{section}.\arabic{figure}}
  \renewcommand\thetable{\Alph{section}.\arabic{table}}
  \renewcommand\thethm{\Alph{section}.\arabic{thm}}
}
\newcommand{\E}{\mathbb{E}}
\newcommand{\cT}{\mathcal{T}}
\newcommand{\veps}{\varepsilon}
\newcommand{\Ome}{\Omega}
\newcommand{\ome}{\omega}
\newcommand{\nab}{\nabla}
\def\be{\begin{equation}}
\def\ee{\end{equation}}
\def\br{\begin{eqnarray}}
\def\er{\end{eqnarray}}
\numberwithin{equation}{section}
\begin{document}
\title{A multi-modes Monte Carlo finite element method for elliptic partial differential 
equations with random coefficients}
\markboth{X. FENG, J. LIN AND C. LORTON}{A MULTI-MODES METHOD FOR SPDEs}

\author{
Xiaobing Feng \thanks{Department of Mathematics, The University of Tennessee,
Knoxville, TN 37996, U.S.A. ({\tt xfeng@math.utk.edu}). }
\and
Junshan Lin\thanks{Department of Mathematics and Statistics, Auburn University,
Auburn, AL 36849, U.S.A. ({\tt jzl0097@auburn.edu}). }
\and
Cody Lorton\thanks{ Department of Mathematics and Statistics, University of West Florida,
Pensacola, FL 32514, U.S.A.({\tt clorton@uwf.edu}). }
}

%\date{}

\maketitle

\begin{abstract}
This paper develops and analyzes an efficient numerical method for solving elliptic partial 
differential equations, where the diffusion coefficients are random perturbations 
of deterministic diffusion coefficients.  The method is based upon a multi-modes 
representation of the solution as a power series of the perturbation parameter, and the
Monte Carlo technique for sampling the probability space.  One key feature of the proposed 
method is that the governing equations for all the expanded mode functions
share the same deterministic diffusion coefficients, thus an efficient direct solver
by repeated use of the $LU$ decomposition matrices can be employed for solving the finite 
element discretized linear systems. It is shown that the computational complexity of 
the whole algorithm is comparable to that of solving a few
deterministic elliptic partial differential equations using the $LU$ director solver.
Error estimates are derived for the method, and numerical experiments are provided to 
test the efficiency of the algorithm and validate the theoretical results.
\end{abstract}

\begin{keywords}
Random partial differential equations,  multi-modes expansion, $LU$ decomposition, 
Monte Carlo method, finite element method.
\end{keywords}

\begin{AMS}  
65N12, %Stability and convergence of numerical methods 
65N15, %Error bounds
65N30. %Finite elements, Rayleigh-Ritz and Galerkin methods, finite methods
\end{AMS}

%%%%%%%%%%
\section{Introduction}\label{sec-1}

There has been increased interest in numerical approximation of
random partial differential equations (PDEs) in recent years, due to the need
to model the uncertainties or noises that arise in industrial and engineering applications
\cite{Babuska_Nobile_Tempone10, Babuska_Tempone_Zouraris04,Caflisch, FGPS, Ishimaru, Ronan_Sarkis}.  
To solve random boundary value problems numerically, the Monte Carlo method
obtains a set of independent identically distributed (i.i.d.) solutions by sampling 
the PDE coefficients, and calculates the mean of the solution via a statistical 
average over all the sampling in the probability space \cite{Caflisch}. 
The stochastic Galerkin method, on the other hand, reduces the SPDE to a high dimensional deterministic 
equation by expanding the random coefficients in the equation using the Karhunen-Lo\`{e}ve or 
Wiener Chaos expansions \cite{Babuska_Nobile_Tempone10,
Babuska_Tempone_Zouraris04, Babuska_Tempone_Zouraris05, DBO, EEU, Liu_Riviere13, Ronan_Sarkis, Xiu_Karniadakis1, Xiu_Karniadakis2}.
In general, these two methods become computationally expensive when a large number of degrees of freedom is involved 
in the spatial discretization, particularly for three dimensional boundary value problems. 
The Monte Carlo method requires solving the boundary value problem 
many times with different sampling coefficients, while the stochastic Galerkin method usually 
leads to a high dimensional deterministic equation that may be too expensive to solve.

Recently, we have developed a new efficient multi-modes Monte Carlo method for modeling acoustic wave 
propagation in weakly random media \cite{FLL}. To solve the governing random Helmholtz equation,
the solution is represented by a sum of mode functions, where each 
mode satisfies a Helmholtz equation with deterministic coefficients and a random source.
The expectation of each mode function is then computed using a Monte Carlo interior penalty 
discontinuous Galerkin (MCIP-DG) method.
We take the advantage that the deterministic Helmholtz operators for all the modes are identical,
and employ an $LU$ solver for obtaining the numerical solutions.
Since the discretized equations for all the modes have the same constant coefficient matrix,
by using the $LU$ decomposition matrices repeatedly, the solutions for all samplings of mode functions
are obtained in an efficient way by performing simple forward
and backward substitutions. This leads to a tremendous saving in the computational costs.
Indeed, as discussed in \cite{FLL}, the computational complexity
of the proposed algorithm is comparable to that of solving a few
deterministic Helmholtz problem using the $LU$ direct solver. 

In this paper, we extend the multi-modes Monte Carlo method for approximating the solution 
to the following random elliptic problem:
\begin{alignat}{2}\label{bvp}
-\nabla \cdot \left(a(\omega,\cdot)\nabla u^\veps (\omega,\cdot)\right) &= f(\omega,\cdot)  &&\qquad \mbox{in } D, \\
u^\veps(\omega,\cdot) &= 0 &&\qquad \mbox{on } \partial D. \label{bnd_cond}
\end{alignat}
Here $D$ is a bounded Lipschitz domain in $\mathbb{R}^d$ ($d=1, 2, 3$),
$a(\omega,x)$ and $f(\omega,x)$ are random fields with continuous and bounded covariance functions.
Let $(\Ome,\mathcal{F}, P)$ be a probability space with sample space $\Ome$,
$\sigma-$algebra $\mathcal{F}$ and probability measure $P$.  We consider the case when 
the diffusion coefficient $a(\omega,x)$ in \eqref{bvp} is a small random perturbation of 
some deterministic diffusion coefficient such that
\begin{align}\label{diff_coeff}
a(\ome, \cdot):=a_0(\cdot)+\veps\eta(\omega,\cdot).
\end{align}
Here $a_0\in W^{1,\infty}(D)$, $\veps$ represents the magnitude of the random fluctuation, and
$\eta\in L^2(\Ome,W^{1,\infty}(D))$ is a random function satisfying
$$
P\left\{\ome\in\Ome;\, ||{\eta(\ome,\cdot)}||_{W^{1,\infty}(D)} \le b_0 \right\}=1
$$
for some positive constants $b_0$. The readers are referred to Section \ref{sec-2} for the 
definition of the function spaces $W^{1,\infty}(D)$ and $L^2(\Ome,W^{1,\infty}(D))$. 
The random diffusion coefficient \eqref{diff_coeff} can be interpreted as diffusion through a random 
perturbation of some deterministic background medium.
It is required that $a(\omega,x)$ is uniformly coercive. 
That is, there exists a positive constant $\underline{a}$ such that 
$$
P\left\{\ome\in\Ome;\, \min_{x\in\bar D}{ a(\omega, x))}>\underline{a} \right\}=1.
$$

The proposed numerical method is based on the following multi-modes expansion of the solution:
 $$u^{\veps}(\omega,x)  = \sum_{n=0}^\infty \veps^n u_n(\omega,x).$$
It is shown in this paper that the expansion series converges to $u^{\veps}$ and each 
mode $u_n$ satisfies an elliptic equation with deterministic coefficients and a random source.
We apply the Monte Carlo method for sampling over the probability space $\Omega$ and 
use the finite element method for solving the boundary value problem for $u_n$ at each realization.
An interesting and important fact of the mode expansion is that 
all $u_n$ share the same deterministic elliptic operator $\nabla\cdot(a_0\nabla)$, hence 
the $LU$ decomposition of the finite element stiff matrix can be used repeatedly.
As such, solving for $u_n(\omega,x)$ for each $n$ and at each realization $\omega=\omega_j$ 
only involve simple forward and backward substitutions with the $L$ and $U$ matrices, 
and the computational complexity for the whole algorithm can be significantly reduced.
It should be pointed out that here the randomly perturbed diffusion coefficient $a(\omega,x)$ 
appears in the leading term of the elliptic differential operator, while for the Helmholtz 
equation considered in \cite{FLL}, the random coefficient only appears in the low order term.
This results in essential differences in both computation and analysis when the multi-modes 
expansion idea is applied to these two problems.  

The rest of the paper is organized as follows.  We begin with introducing some space 
notations in Section \ref{sec-2} and discuss the well-posedness of the problem 
\eqref{bvp}-\eqref{bnd_cond}.  In Section \ref{sec-3}, we introduce the multi-modes 
expansion of the solution as a power series of $\veps$, and derive the error estimation 
for its finite-modes approximation.  The details of the multi-modes Monte Carlo method 
are given in Section \ref{sec-4}, where the computational complexity of the algorithm and 
the error estimations for the numerical solution are also obtained. 
Several numerical examples are provided in Section \ref{sec-5} to demonstrate the efficiency 
of the method and to validate the theoretical results.  We end the paper with a discussion on 
generalization of the proposed numerical method to more general random PDEs in Section \ref{sec-6}.

%%%%%%%%%%%
\section{Preliminaries}\label{sec-2}
Standard space notations will be adopted in this paper \cite{Adams, Gilbarg_Trudinger01, LM}. For example,
$L^2(D)$ denotes the Hilbert space of all square integrable functions equipped with the 
inner product $(f,g)_D:=\displaystyle{\int_\Omega f g\,dx}$ and the induced norm
$$||u||_{L^2(D)} = \left(\int_D |u(x)|^2 dx \right)^{\frac12},$$
and $L^\infty(D)$ is the set of bounded measurable functions equipped with the norm
$$  ||u||_{L^\infty(D)} = \underset{x\in D}{\mbox{esssup}} |u(x)|. $$
For a positive integer $m$ and a fraction $s=m+\sigma$ with some $\sigma\in(0,1)$,
we define the Sobolev spaces $H^{m}(D)$ and $H^{s}(D)$ as
\begin{align}
H^{m}(D) &:=\{ u\in L^2(D);\; ||u||_{H^m(D)} < \infty \},\\
H^{s}(D) &:=\{ u\in L^2(D);\; ||u||_{H^s(D)} < \infty \}, 
\end{align}
where 
\begin{eqnarray*}
||u||_{H^m(D)}^2 &:=& \sum_{|\alpha|\le m} ||\partial^\alpha u||^2_{L^2(D)},\\
||u||_{H^s(D)}^2 &:=&  ||u||_{H^m(D)}^2 + \sum_{|\alpha|=m}\int\int \dfrac{|\partial^\alpha u(x)- \partial^\alpha u(y)|^2}{|x-y|^{n+2\sigma}}dxdy.
 \end{eqnarray*}
We also define $H_0^{m}(D)$ and $H_0^{s}(D)$ to be the subspaces of $H^{m}(D)$ and $H^{s}(D)$ 
with zero trace, and
 $H^{-m}(D)$ and $H^{-s}(D)$ as the dual spaces of  $H^m_0(D)$ and $H^s_0(D)$, respectively.
The Sobolev space $W^{1,\infty}(D)$ is given by
$$W^{1,\infty}(D) := \{  u\in L^{\infty}(D);\;  ||u||_{W^{1,\infty}(D)} < \infty \},$$
where $ ||u||_{W^{1,\infty}(D)} := ||u||_{L^\infty(D)}+||\nabla u||_{L^{\infty}(D)}$.
Finally, for a Banach space $X$, let $L^2(\Ome, X)$ denote the space of all measurable 
function $u: \Omega \to X$ such that $\|u\|_{L^2(\Omega,X)}:=\Bigl(\displaystyle{\int_\Omega 
\|u(\omega,\cdot)\|_{X}^2 d\omega \Bigr)^{\frac12} < \infty} $. Later in this paper, we shall take
$X$ to be $H^m(D)$, $H^s(D)$, or $W^{1,\infty}(D)$.

For a given source function $f\in L^2(\Ome, H^{-1}(D))$, a weak solution for the problem 
\eqref{bvp}--\eqref{bnd_cond} is defined as a function  $u\in L^2(\Ome, H_0^1(D))$ such that
\begin{equation}\label{var_prob}
\int_\Ome  \bigl(a \nab u^\veps, \nab v\bigr)_D \; dP 
= \int_\Ome \langle f,  v \rangle_D \; dP \qquad\forall
v\in L^2(\Ome, H_0^1(D)),
\end{equation}
where $(\cdot,\cdot)_D$ stands for the inner product on $L^2(D)$, and 
$\langle \cdot,\cdot \rangle_D$ denotes the dual product on $H^{-1}(D)\times H_0^1(D)$.
Following the standard energy estimates and applying the Lax-Milgram theorem, 
it can be shown that \eqref{var_prob} attains a unique solution in $u\in L^2(\Ome, H_0^1(D))$ 
\cite{Babuska_Tempone_Zouraris04, Gilbarg_Trudinger01, LM}.  If $f\in L^2(\Ome, H^{-1+\sigma}(D))$ 
with $\sigma\in(0,1]$ and the boundary of the domain $D$ is sufficiently smooth,
then elliptic regularity theory gives rise to the following energy estimate (cf.  \cite{LM})
\begin{equation}\label{u_reg}
\E(\norm{u^\veps}_{H^{1+\sigma}(D)}^2) \leq C \;\E(\norm{f}_{H^{-1+\sigma}(D)}^2),
\end{equation}
where $C$ is some constant deepening on $a(\omega, x)$ and the domain $D$.
In particular, when $\sigma=1$, or equivalently $f\in L^2(\Ome, L^{2}(D))$, we have
\begin{equation}\label{H2_estimate}
\E(\norm{u^\veps}_{H^2(D)}^2) \leq C \;\E(\norm{f}_{L^2(D)}^2).
\end{equation}

%%%%%%%%%%%%%%
\section{Multi-modes expansion of the solution}\label{sec-3}
Our multi-modes Monte Carlo method will be based on the following multi-modes representation 
for the solution of \eqref{bvp}--\eqref{bnd_cond}
\begin{equation}\label{u_exp}
u^{\veps}(\omega,x)  = \sum_{n=0}^\infty \veps^n u_n(\omega,x),
\end{equation}
where the convergence of the series will be justified below.  

Substituting the above expansion into \eqref{bvp} and matching the coefficients of $\veps^n$ 
order terms for $n=0,1,2,\cdots$, it follows that
\begin{eqnarray}\label{un_pde}
-\nabla \cdot (a_0 \nabla u_0(\omega,\cdot)  ) &=& f(\omega,\cdot),  \label{u0_pde} \\
-\nabla \cdot (a_0 \nabla u_n(\omega,\cdot) )  &=& \nabla \cdot (\eta \nabla u_{n-1}(\omega,\cdot) )
\qquad \mbox{for} \; n\geq 1 \label{un1_pde}.
\end{eqnarray}
Correspondingly, the boundary condition for each mode function $u_n$ is given by
\begin{equation}\label{un_bnd_cond}
u_n(\omega,\cdot) =0  \qquad \mbox{on} \; \partial D  \qquad\mbox{for }   n\geq 0.
\end{equation}
It is clear that each mode satisfies an elliptic equation with the same deterministic 
coefficient $a_0$ and a random source term. On the other hand, for $n\ge 1$, the source 
term in the PDE is given by the previous mode $u_{n-1}$.
This implies that the mode $u_n$ has to be solved recursively for $n=0, 1, 2, \cdots$.
We first derive the energy estimate for each mode $u_n$.

\medskip

\begin{thm}\label{un_energy}
There exists a unique solution $u_n \in L^2(\Ome, H_0^1(D))$ to the problem \eqref{u0_pde} and \eqref{un_bnd_cond} for $n=0$,
and the problem \eqref{un1_pde}--\eqref{un_bnd_cond} for $n\geq 1$. In addition, if $f\in L^2(\Ome, H^{-1+\sigma}(D))$ for $\sigma\in(0,1]$, there holds
\begin{equation}\label{un_est}
\E(\norm{u_n}_{H^{1+\sigma}(D)}^2) \leq  C_0^{n+1} \;\E(\norm{f}_{H^{-1+\sigma}(D)}^2)
\end{equation}
for some constant $C_0$ independent of $n$ and $\veps$.
\end{thm}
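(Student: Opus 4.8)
The plan is to establish existence, uniqueness, and the energy bound by induction on $n$, using the regularity estimate \eqref{u_reg} recorded in Section \ref{sec-2} as the base tool. The key observation is that every mode $u_n$ solves an elliptic problem with the \emph{same} deterministic, uniformly coercive operator $-\nabla\cdot(a_0\nabla\,\cdot\,)$; only the source term changes. For $n=0$ the source is $f\in L^2(\Ome,H^{-1+\sigma}(D))$, so \eqref{u_reg} applies directly (with $a_0$ in place of $a$) and yields both well-posedness and the bound $\E(\norm{u_0}_{H^{1+\sigma}(D)}^2)\le C_0\,\E(\norm{f}_{H^{-1+\sigma}(D)}^2)$, which is \eqref{un_est} for $n=0$.

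For the inductive step I would assume $u_{n-1}\in L^2(\Ome,H_0^1(D))$ with the stated bound and treat the right-hand side of \eqref{un1_pde}, namely $g_n:=\nabla\cdot(\eta\nabla u_{n-1})$, as the new source. The crucial step is to show $g_n\in L^2(\Ome,H^{-1+\sigma}(D))$ together with a quantitative estimate. Since $\eta(\ome,\cdot)\in W^{1,\infty}(D)$ almost surely with $\norm{\eta(\ome,\cdot)}_{W^{1,\infty}(D)}\le b_0$, multiplication by $\eta$ is a bounded operator on the relevant Sobolev scale, and taking one divergence drops the regularity index by one. Concretely I expect
\begin{equation*}
\norm{g_n(\ome,\cdot)}_{H^{-1+\sigma}(D)}
 \le C\,\norm{\eta(\ome,\cdot)}_{W^{1,\infty}(D)}\,\norm{u_{n-1}(\ome,\cdot)}_{H^{1+\sigma}(D)}
 \le C\,b_0\,\norm{u_{n-1}(\ome,\cdot)}_{H^{1+\sigma}(D)},
\end{equation*}
so that $\E(\norm{g_n}_{H^{-1+\sigma}(D)}^2)\le C^2 b_0^2\,\E(\norm{u_{n-1}}_{H^{1+\sigma}(D)}^2)$. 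Feeding this into \eqref{u_reg} gives $\E(\norm{u_n}_{H^{1+\sigma}(D)}^2)\le C'\,\E(\norm{u_{n-1}}_{H^{1+\sigma}(D)}^2)$, and combining with the inductive hypothesis produces the geometric factor $C_0^{n+1}$ provided $C_0$ is chosen as the maximum of the base constant and the per-step amplification constant $C'$. This chaining of the single-step estimate is what manufactures the power $C_0^{n+1}$.

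The main obstacle is the duality estimate for $g_n=\nabla\cdot(\eta\nabla u_{n-1})$ on the fractional scale $\sigma\in(0,1)$. For $\sigma=1$ it is elementary: $\eta\nabla u_{n-1}\in L^2(D)$ with norm $\le b_0\norm{u_{n-1}}_{H^1(D)}$, so its divergence lies in $H^{-1}(D)$, but this only feeds back into the $H^1$ (not $H^{1+\sigma}$) estimate, so a little care is needed to match indices. For general $\sigma$ I would argue by duality: for $v\in H_0^{1-\sigma}(D)$,
\begin{equation*}
\langle g_n, v\rangle_D = -\int_D \eta\,\nabla u_{n-1}\cdot\nabla v\,dx,
\end{equation*}
and I must bound this by $\norm{u_{n-1}}_{H^{1+\sigma}}\norm{v}_{H^{1-\sigma}}$. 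The delicate point is that $\nabla v\in H^{-\sigma}$ while $\eta\nabla u_{n-1}\in H^{\sigma}$, so the pairing is legitimate, but one must verify that pointwise multiplication by $\eta\in W^{1,\infty}$ preserves $H^\sigma$ for $\sigma\in(0,1)$ with norm controlled by $\norm{\eta}_{W^{1,\infty}}$. This multiplier property on fractional Sobolev spaces is standard (it follows from interpolation between the $L^2$ and $H^1$ endpoints, or from the Gagliardo seminorm characterization), and I would cite or briefly verify it rather than reprove it. Once this multiplier estimate is in hand, the induction closes and the constant $C_0$ is exhibited explicitly in terms of $C$, $b_0$, and the regularity constant in \eqref{u_reg}.
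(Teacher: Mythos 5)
Your proposal is correct and follows essentially the same route as the paper: existence via Lax--Milgram, then induction on $n$ using elliptic regularity for the fixed operator $-\nabla\cdot(a_0\nabla\,\cdot\,)$, with the single-step bound $\norm{\nabla\cdot(\eta\nabla u_{n-1})}_{H^{-1+\sigma}(D)}\lesssim b_0\norm{u_{n-1}}_{H^{1+\sigma}(D)}$ chained to produce $C_0^{n+1}$. The only (minor) difference is that the paper justifies that bound by expanding $\nabla\cdot(\eta\nabla u_{n-1})=\nabla\eta\cdot\nabla u_{n-1}+\eta\Delta u_{n-1}$ and estimating each piece, whereas you keep the divergence form and argue by duality with the $W^{1,\infty}$ multiplier property on $H^{\sigma}$ --- the same underlying fact, which you actually treat more carefully than the paper does.
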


\medskip

\begin{proof}
For $n=0$, the existence of the weak solutions can be deduced from the Lax-Milgram Theorem,
and the desired energy estimate
\begin{equation*}
\E(\norm{u_0}_{H^{1+\sigma}(D)}^2) \leq  \tilde C_0\;\E(\norm{f}_{H^{-1+\sigma}(D)}^2),
\end{equation*}
follows directly by the elliptic regularity theory \cite{LM}.

We show the case of $n\ge 1$ by induction. Assume that \eqref{un_est} holds
for $n = 0, 1, \cdots, l-1$, then for the source term in \eqref{un1_pde}, it follows that $\nabla \cdot (\eta\nabla u_{l-1})\in L^2(\Ome, H^{-1+\sigma}(D))$. 
By the Lax-Milgram theorem, there exists $u_l \in L^2(\Ome, H_0^1(D))$ solving
\eqref{un1_pde} for $n = l$.
Let $C_0=\tilde C_0(1+b_0^2)$, by the elliptic regularity theory \cite{LM}, we get
\begin{eqnarray*}
\E(\norm{u_l}_{H^{1+\sigma}(D)}^2) & \le & \tilde C_0 \; \E(\norm{\nab\cdot(\eta \nab u_{l-1})}_{H^{-1+\sigma}(D)}^2) \\
                          & \le & \tilde C_0 \left( \E(\norm{\nab\eta\cdot\nab u_{l-1})}_{H^{-1+\sigma}(D)}^2) + \E(\norm{\eta\Delta u_{l-1}}_{H^{-1+\sigma}(D)}^2) \right)\\
                          & \le & \tilde C_0 b_0^2 \; \E(\norm{u_{l-1}}_{H^{1+\sigma}(D)}^2)\\
                          & \le & C_0^{l+1} \;\E(\norm{f}_{H^{-1+\sigma}(D)}^2).
\end{eqnarray*}
This completes the proof.
\end{proof}

A more practical and interesting mode expansion for the solution is given by its finite-terms 
approximation.  Namely, for a non-negative integer $N$, we define the partial sum
\begin{equation}\label{u_exp_finite}
 U^\veps_N(\omega,x):= \sum_{n=0}^{N-1} \veps^n u_n(\omega,x),
\end{equation}
and its associated residual
\begin{equation}\label{rN}
 r_{N}^{\veps}(\omega,x) := u^{\veps}(\omega,x) - U^\veps_N(\omega,x).
\end{equation}
For a given $N$, an upper bound for the residual $r_{N}^{\veps}$ is established by the 
following theorem.

\begin{thm}\label{err_rN}
Assume that $\veps<1$ and $f\in L^2(\Ome, H^{-1+\sigma}(D))$ for $\sigma\in(0,1]$. 
Let $r_{N}^{\veps}$ be the residual defined above. Then
\begin{equation}\label{rN_est}
\E(\norm{r_{N}^{\veps}}_{H^{1+\sigma}(D)}^2) \leq  C_0^{N+1} \veps^{2N} \;\E(\norm{f}_{H^{-1+\sigma}(D)}^2)
\end{equation}
for some positive constant $C_0$ independent of $N$ and $\veps$.
\end{thm}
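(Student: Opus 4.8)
The plan is to avoid the infinite series altogether and instead derive the boundary value problem satisfied by the residual $r_N^\veps$ directly, then feed its right-hand side into the a priori estimate \eqref{u_reg} together with the mode bound of Theorem \ref{un_energy}. The appeal of this route is that it uses only the finitely many estimates already in hand and does not presuppose convergence of the expansion \eqref{u_exp}.

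First I would compute $-\nab\cdot(a\nab U_N^\veps)$ with $a=a_0+\veps\eta$, splitting it into the $a_0$-part and the $\veps\eta$-part. Substituting the mode equations \eqref{u0_pde}--\eqref{un1_pde} into the $a_0$-part and re-indexing gives $f+\veps\sum_{m=0}^{N-2}\veps^m\,\nab\cdot(\eta\nab u_m)$, while the $\veps\eta$-part equals $-\veps\sum_{n=0}^{N-1}\veps^n\,\nab\cdot(\eta\nab u_n)$. The two sums telescope, all terms of index $0,\dots,N-2$ cancelling, so that
\[
-\nab\cdot(a\nab U_N^\veps) = f - \veps^N\,\nab\cdot(\eta\nab u_{N-1}).
\]
Subtracting this identity from \eqref{bvp} and recalling $r_N^\veps=u^\veps-U_N^\veps$ together with the homogeneous boundary condition \eqref{un_bnd_cond}, I find that the residual solves the original problem \eqref{bvp}--\eqref{bnd_cond} with a new source, namely
\[
-\nab\cdot(a\nab r_N^\veps)=\veps^N\,\nab\cdot(\eta\nab u_{N-1})\ \text{ in }D,\qquad r_N^\veps=0\ \text{ on }\partial D.
\]

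Next I would apply the regularity estimate \eqref{u_reg} to this problem. Its constant depends only on the uniform coercivity bound $\underline a$ and on $\norm{a}_{W^{1,\infty}(D)}\le\norm{a_0}_{W^{1,\infty}(D)}+b_0$, both of which hold almost surely and are independent of $\veps$ for $\veps<1$; hence $C$ may be taken uniform in $\veps$. This yields $\E(\norm{r_N^\veps}_{H^{1+\sigma}(D)}^2)\le C\veps^{2N}\,\E(\norm{\nab\cdot(\eta\nab u_{N-1})}_{H^{-1+\sigma}(D)}^2)$. Bounding the source exactly as in the proof of Theorem \ref{un_energy}, by writing $\nab\cdot(\eta\nab u_{N-1})=\nab\eta\cdot\nab u_{N-1}+\eta\Delta u_{N-1}$ and using $\norm{\eta}_{W^{1,\infty}(D)}\le b_0$ almost surely, controls it by $C b_0^2\,\E(\norm{u_{N-1}}_{H^{1+\sigma}(D)}^2)$. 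Finally, Theorem \ref{un_energy} applied with $n=N-1$ gives $\E(\norm{u_{N-1}}_{H^{1+\sigma}(D)}^2)\le C_0^{N}\,\E(\norm{f}_{H^{-1+\sigma}(D)}^2)$, and chaining the three inequalities produces a bound proportional to $C_0^{N}\veps^{2N}\,\E(\norm{f}_{H^{-1+\sigma}(D)}^2)$; absorbing the $\veps$-independent prefactor by enlarging the constant delivers the stated $C_0^{N+1}\veps^{2N}$ form.

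The step demanding the most care is the telescoping computation that isolates the single $\veps^N$ term: it rests on correctly re-indexing the two sums and checking that every lower-order contribution cancels exactly, which is precisely what makes the residual $O(\veps^N)$ rather than $O(\veps)$. The only other point I would verify carefully is the $\veps$-uniformity of the constant in \eqref{u_reg}, as indicated above; once that is in place the remaining estimates are routine and mirror those already used for Theorem \ref{un_energy}.
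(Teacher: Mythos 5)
Your proof is correct, but it takes a genuinely different route from the paper's. The paper argues by induction on $N$: it shows that the residual satisfies the recursion $-\nab\cdot(a_0\nab r_{l}^{\veps})=\veps\,\nab\cdot(\eta\nab r_{l-1}^{\veps})$ with the \emph{deterministic background operator} on the left, so that each step of the induction contributes a contraction factor $C_0\veps^2$, starting from the base case $-\nab\cdot(a_0\nab r_{1}^{\veps})=\veps\,\nab\cdot(\eta\nab u^{\veps})$. You instead telescope the mode equations to obtain the closed-form identity $-\nab\cdot(a\nab r_N^{\veps})=\veps^N\,\nab\cdot(\eta\nab u_{N-1})$ with the \emph{full random operator} $a=a_0+\veps\eta$, and then apply the regularity estimate \eqref{u_reg} once together with Theorem \ref{un_energy} for $u_{N-1}$. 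Your computation checks out (the cancellation of all terms of index $0,\dots,N-2$ is exactly right), and the uniformity in $\omega$ and $\veps$ of the constant in \eqref{u_reg} that you flag is not an extra cost relative to the paper: the paper's own base case already invokes \eqref{u_reg} for the perturbed operator when it bounds $\E(\norm{u^\veps}^2_{H^{1+\sigma}(D)})$ by $\E(\norm{f}^2_{H^{-1+\sigma}(D)})$. What your version buys is transparency --- the single source term $\veps^N\nab\cdot(\eta\nab u_{N-1})$ makes the order $\veps^N$ of the residual visible in one line --- at the price of ending with the prefactor $Cb_0^2C_0^{N}$ rather than exactly $C_0^{N+1}$; since the theorem only asserts the bound for \emph{some} constant independent of $N$ and $\veps$, enlarging $C_0$ as you do is legitimate. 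What the paper's induction buys is that, after the base case, every elliptic solve involves only the fixed operator $-\nab\cdot(a_0\nab\cdot)$, and the error propagation appears explicitly as a geometric contraction $\E(\norm{r_{l}^{\veps}}^2)\le C_0\veps^2\,\E(\norm{r_{l-1}^{\veps}}^2)$, which feeds directly into Corollary \ref{err_rN_inf}.
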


\begin{proof} 
By a direct comparison,  it is easy to check that $r_{1}^{\veps}=u^{\veps}-u_0$ satisfies
\begin{alignat*}{2}
-\nabla \cdot (a_0(\omega,\cdot)\nabla r_{1}^{\veps}(\omega,\cdot) ) 
& = \veps\; \nabla \cdot \left(\eta(\omega,\cdot)\nabla u^\veps (\omega,\cdot)\right)   
&&\qquad \mbox{in} \; D,\\
r_{1}^{\veps}(\omega,\cdot) &= 0  &&\qquad \mbox{on} \; \partial D.\\
\end{alignat*}
Therefore,
\begin{eqnarray*}
\E(\norm{r_{1}^{\veps}}_{H^{1+\sigma}(D)}^2) &\leq & \tilde C_0 \;\veps^2 \;\E(\norm{\nab\cdot(\eta \nab u^\veps)}_{H^{-1+\sigma}(D)}^2)  \\
                          &\leq & \tilde C_0 b_0^2\; \veps^2 \; \E(\norm{u^\veps}_{H^{1+\sigma}(D)}^2)  \\
                          &\leq & C_0^2 \; \veps^2 \;\E(\norm{f}_{H^{-1+\sigma}(D)}^2),
\end{eqnarray*}
where $C_0=\tilde C_0(1+b_0^2)$. Assume that \eqref{rN_est} holds for $n = 1, \cdots, l-1$.
For $n=l$, it can be shown that $r_{l}^{\veps}$ is the solution of
\begin{alignat*}{2}
-\nabla \cdot (a_0(\omega,\cdot)\nabla r_{l}^{\veps}(\omega,\cdot) ) & = \veps\; \nabla \cdot \left(\eta(\omega,\cdot)\nabla r_{l-1}^{\veps}(\omega,\cdot)\right)   &&\qquad \mbox{in} \; D,\\
r_{l}^{\veps}(\omega,\cdot)  &= 0  &&\qquad \mbox{on} \; \partial D.
\end{alignat*}
A parallel argument as above yields the desired estimate
$$ \E(\norm{r_{l}^{\veps}}_{H^{1+\sigma}(D)}^2) \leq C_0 \veps^2 \; \E(\norm{r_{l-1}^{\veps}}_{H^{1+\sigma}(D)}^2)  \leq  C_0^{l+1} \veps^{2l} \;\E(\norm{f}_{H^{-1+\sigma}(D)}^2). $$ 
The proof is complete.
\end{proof}

In particular, by letting $N\to\infty$, we obtain the convergence of the partial sum $U^\veps_N$:
\begin{cor}\label{err_rN_inf}
Let $r_{N,\varepsilon}$ be the residual defined in \eqref{rN}. 
If $\veps < \min\left\{1, \dfrac{1}{\sqrt{C_0}}\right\}$, then $\E(\norm{r_{N}^{\veps}}_{H^{1+\sigma}(D)}^2) \to 0 $ as $N\to\infty$.
\end{cor}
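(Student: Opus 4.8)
The plan is to read off the corollary as an immediate consequence of Theorem~\ref{err_rN}. The inequality \eqref{rN_est} gives us complete control of the residual in terms of a single geometric factor, so the entire argument reduces to analyzing when that factor tends to zero.

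First I would fix $\sigma\in(0,1]$ and a source $f\in L^2(\Ome, H^{-1+\sigma}(D))$, and abbreviate $M:=\E(\norm{f}_{H^{-1+\sigma}(D)}^2)$, which is a finite constant independent of $N$. Invoking \eqref{rN_est} directly, for every $N\ge 1$ we have
\begin{equation*}
\E(\norm{r_{N}^{\veps}}_{H^{1+\sigma}(D)}^2) \leq C_0^{N+1}\,\veps^{2N}\,M
= C_0\,M\,\bigl(C_0\veps^2\bigr)^{N}.
\end{equation*}
Thus the right-hand side is a constant multiple of the $N$-th power of the ratio $q:=C_0\veps^2$. The next step is to check that the hypothesis on $\veps$ forces $q<1$: the assumption $\veps<\min\{1,1/\sqrt{C_0}\}$ yields in particular $\veps<1/\sqrt{C_0}$, hence $\veps^2<1/C_0$ and therefore $q=C_0\veps^2<1$. (Here I am implicitly using that $C_0>0$, which holds since $C_0=\tilde C_0(1+b_0^2)$ with $\tilde C_0>0$; note the extra clause $\veps<1$ is what is needed to apply Theorem~\ref{err_rN} in the first place.)

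Finally, since $0\le q<1$, the elementary fact $q^N\to 0$ as $N\to\infty$ gives
\begin{equation*}
0\le \E(\norm{r_{N}^{\veps}}_{H^{1+\sigma}(D)}^2)\le C_0\,M\,q^{N}\To 0
\qquad\text{as }N\to\infty,
\end{equation*}
and the squeeze theorem delivers the claim. I do not expect any genuine obstacle: the only point requiring care is to confirm that the stated bound on $\veps$ is exactly what makes the geometric factor subunital, and to note that the convergence is in fact at a geometric rate $q^N$ rather than merely qualitative. One should also flag the minor notational slip that the corollary's statement names the residual $r_{N,\varepsilon}$ while the body and Theorem~\ref{err_rN} use $r_{N}^{\veps}$; these denote the same object defined in \eqref{rN}.
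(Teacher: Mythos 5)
Your argument is correct and is exactly the route the paper intends: the corollary is stated as an immediate consequence of Theorem~\ref{err_rN}, obtained by rewriting the bound \eqref{rN_est} as a constant times $(C_0\veps^2)^N$ and noting that the hypothesis $\veps<1/\sqrt{C_0}$ makes this geometric factor less than one. Your additional remarks (the geometric convergence rate and the $r_{N,\varepsilon}$ versus $r_N^{\veps}$ notational slip) are accurate but do not change the substance.
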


Corollary \ref{err_rN_inf} shows that the expansion (\ref{u_exp}) is valid and the partial 
sum $U^\veps_N$ given by \eqref{u_exp_finite} converges to $u_{\eps}$ as $N \to\infty$, 
as long as $\veps$ is sufficiently small.

%%%%%%%%%
\section{Multi-modes Monte Carlo method}\label{sec-4}

\subsection{Numerical algorithm and computational complexity}\label{sec-4.1}
We introduce the multi-modes Monte Carlo method for approximating
the solution of the problem \eqref{bvp}--\eqref{bnd_cond}. The method is based upon
the multi-modes representation \eqref{u_exp} and its finite-terms approximation \eqref{u_exp_finite}.
For each mode $u_n$, the standard finite difference or finite element method may be 
applied to discretize the elliptic partial differential equations \eqref{u0_pde}--\eqref{un1_pde}, 
and the classical Monte Carlo method is employed for sampling
the probability space and for computing the statistics of the numerical solution. Here, 
we introduce the algorithm wherein the finite element method is used for solving the elliptic PDEs.  

Let $M$ be a large positive integer which denotes the number of realizations for the Monte Carlo method.
$\cT_h$ stands for a quasi-uniform partition of $D$ such that
$\overline{D}=\bigcup_{K\in\cT_h} \overline{K}$. Let $h:=\mbox{max}\{h_K; K\in\cT_h\}$,
wherein $h_K$ is the diameter of $K\in \cT_h$, and $P$ be number of the degrees of freedom
associated with the triangulation $\cT_h$ in each direction.
Let $V_r^h$ be the standard finite element space defined by
\[ 
V_r^h:=\{v\in H_0^1(D);\; v|_K \; \mbox{is a polynomial of degree $r$ for each} \; K\in\cT_h \}. 
\]
For each $j=1,2,\cdots, M$, we sample i.i.d. realizations of the source function
$f(\omega_j,\cdot)$ and random medium coefficient $\eta(\omega_j,\cdot)$.
The finite element solutions for the mode $u_n^h(\omega_j,\cdot)$ are obtained recursively as follows:
\begin{alignat}{2}
\bigl(a_0 \nab u_0^h(\ome_j,\cdot), \nab v^h)_{D} 
&= \langle f(\ome_j,\cdot), v^h \rangle_{D} &&\quad\forall v^h\in V_r^h,  \label{u0h_bvp} \\
\bigl(a_0 \nab u_n^h(\ome_j,\cdot), \nab v^h)_{D} &= - \big( \eta(\ome_j,\cdot) \nab u_{n-1}^h(\ome_j,\cdot), \nab v^h \bigl)_{D} &&\quad \forall v^h\in V_r^h \label{un1h_bvp}
\end{alignat}
for $n\ge 1$.
An application of the Lax-Milgram theorem and an induction argument for the variational problems
\eqref{u0h_bvp} and \eqref{un1h_bvp} yields the following energy estimates for the finite element 
solution $u_n^h(\ome_j,\cdot)$.

\begin{thm}\label{unh_est}
 If $f\in L^2(\Ome, H^{-1+\sigma}(D))$ for $\sigma\in(0,1]$, there holds for $n\geq 0$
\begin{equation}
\E\bigr(\norm{u_n^h}^2_{H^1(D)}\bigr) \leq  C_0^{n+1} \;\E(\norm{f}_{H^{-1+\sigma}(D)}^2)
\end{equation}
for some constant $C_0$ independent of $n$ and $\veps$.
\end{thm}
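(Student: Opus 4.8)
The plan is to mirror the induction in the proof of Theorem~\ref{un_energy}, but to carry out every estimate purely at the energy ($H^1$) level. The reason is that the finite element iterates $u_n^h$ live in the piecewise polynomial space $V_r^h$ and carry no extra Sobolev regularity, so the elliptic regularity theory used for the continuous modes is unavailable here; instead each step rests only on the coercivity of the bilinear form $(a_0\nab\cdot,\nab\cdot)_D$ on $H_0^1(D)$ together with the Poincar\'e inequality. I would write $\alpha>0$ for a coercivity constant, so that $\alpha\norm{v^h}_{H^1(D)}^2\le(a_0\nab v^h,\nab v^h)_D$ for all $v^h\in V_r^h$; this is legitimate because $a_0$ is bounded below by a positive constant and $V_r^h\subset H_0^1(D)$, the factor $\alpha$ absorbing both this lower bound and the Poincar\'e constant.

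For the base case $n=0$ I would take $v^h=u_0^h(\ome_j,\cdot)$ in \eqref{u0h_bvp}. Coercivity on the left and the duality pairing on the right give $\alpha\norm{u_0^h}_{H^1(D)}^2\le\langle f,u_0^h\rangle_D\le\norm{f}_{H^{-1}(D)}\norm{u_0^h}_{H^1(D)}$, whence $\norm{u_0^h}_{H^1(D)}\le\alpha^{-1}\norm{f}_{H^{-1}(D)}\le\alpha^{-1}\norm{f}_{H^{-1+\sigma}(D)}$, the last step using the continuous embedding $H^{-1+\sigma}(D)\hookrightarrow H^{-1}(D)$, valid for $\sigma>0$. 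Squaring and taking expectation over $\Ome$ yields $\E(\norm{u_0^h}_{H^1(D)}^2)\le\alpha^{-2}\,\E(\norm{f}_{H^{-1+\sigma}(D)}^2)$, which is the claim for $n=0$ once $C_0\ge\alpha^{-2}$.

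For the inductive step I would assume the bound for $n-1$ and take $v^h=u_n^h(\ome_j,\cdot)$ in \eqref{un1h_bvp}. The left side is bounded below by $\alpha\norm{u_n^h}_{H^1(D)}^2$, while the right side is controlled by Cauchy--Schwarz and the almost sure bound $\norm{\eta(\ome_j,\cdot)}_{L^\infty(D)}\le\norm{\eta(\ome_j,\cdot)}_{W^{1,\infty}(D)}\le b_0$, giving $\abs{(\eta\nab u_{n-1}^h,\nab u_n^h)_D}\le b_0\norm{\nab u_{n-1}^h}_{L^2(D)}\norm{u_n^h}_{H^1(D)}$. Cancelling one factor of $\norm{u_n^h}_{H^1(D)}$ leaves $\norm{u_n^h}_{H^1(D)}\le(b_0/\alpha)\norm{u_{n-1}^h}_{H^1(D)}$. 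Squaring, taking expectation, and inserting the induction hypothesis produces $\E(\norm{u_n^h}_{H^1(D)}^2)\le(b_0/\alpha)^2\,C_0^{\,n}\,\E(\norm{f}_{H^{-1+\sigma}(D)}^2)$.

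To close the recursion in the stated form it suffices to fix $C_0:=\max\{\alpha^{-2},(b_0/\alpha)^2\}$ at the outset; then the base constant is at most $C_0$ and the per-step factor $(b_0/\alpha)^2$ is at most $C_0$, so $(b_0/\alpha)^2 C_0^{\,n}\le C_0^{\,n+1}$ and the induction closes. Since $a_0$, $\alpha$, and $b_0$ are independent of $n$ and of $\veps$, so is $C_0$. I do not expect a genuine obstacle here: the estimate is in fact easier than that of Theorem~\ref{un_energy} because no regularity theory enters. The only points requiring care are the uniform choice of $C_0$ serving simultaneously as the base constant and the per-step factor, and the observation that the pointwise-in-$\ome$ inequalities (valid almost surely thanks to the coercivity of $a_0$ and the a.s.\ bound on $\eta$) integrate to the asserted inequalities in expectation.
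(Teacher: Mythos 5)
Your argument is correct and is precisely the route the paper intends: the paper gives no written proof of Theorem \ref{unh_est}, saying only that it follows from ``an application of the Lax--Milgram theorem and an induction argument'' for \eqref{u0h_bvp}--\eqref{un1h_bvp}, and your coercivity-plus-induction energy estimates are a faithful realization of exactly that sketch. The two points you flag --- choosing $C_0$ large enough to serve as both the base constant and the per-step factor, and the absence of any elliptic regularity since $u_n^h\in V_r^h$ only admits $H^1$ control --- are the right ones, and the estimates integrate over $\Ome$ as you describe.
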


We then approximate the expectation $\E(u_n)$ of each mode $u_n$ by the sampling average  
$\frac{1}{M}\sum_{j=1}^M u_n^h(\omega_j,\cdot)$. Consequently, by virtue of \eqref{u_exp_finite}, 
the algorithm yields a finite-modes approximation of $\E(u^\veps)$ given by
\begin{equation}\label{Psi_N}
\Psi^h_N = \dfrac{1}{M}\sum_{j=1}^M \sum_{n=0}^{N-1} \veps^n u_n^h(\omega_j,\cdot).
\end{equation}

Very importantly, it is observed from \eqref{u0_pde}--\eqref{un1_pde} that 
all the modes share the same deterministic elliptic operator $-\nabla \cdot (a_0 \nabla)$
and the bilinear forms in  \eqref{u0h_bvp}--\eqref{un1h_bvp} are identical.
Using this crucial fact, it turns out that an $LU$ direct solver for the discretized equations 
\eqref{u0h_bvp}--\eqref{un1h_bvp} leads to a tremendous saving in the computational costs.
More precisely, we first compute an $LU$ decomposition for the associated matrix
of the bilinear form $\bigl(a_0 \nab u_n^h(\ome_j,\cdot), \nab v^h)_{D}$.
The resulting lower and upper triangular matrices, $L$ and $U$, are stored and used repeatedly
to obtain the solutions for all modes and all samples
by simple forward and backward substitutions. This speeds up the sampling tremendously, since 
in contrast to a complete linear solver with $O(P^{3d})$ computational complexity, only an $O(P^{2d})$
computational complexity is involved to calculate one single sample by the use of forward and backward substitutions.
Here $d$ denotes the spatial dimension of the domain $D$.
The precise description of this procedure is given in the following algorithm. \\

\medskip
\noindent
{\bf Main Algorithm}
\smallskip

\begin{description}
\item Inputs: $f, \eta, \veps, h, M, N.$
\item Set $\Psi^h_N(\cdot)=0$ (initializing).
\begin{description}
\item For $j=1,2,\cdots, M$
\item Set $U^h_N(\omega_j,\cdot)=0$ (initializing).
\begin{description}
\item For $n=0,1,\cdots, N-1$
\item Solve for $u^h_n(\omega_j,\cdot) \in V^h_r$ such that
\begin{eqnarray*}
&& \bigl(a_0 \nab u_0^h(\ome_j,\cdot), \nab v^h)_{D} = \langle f(\ome_j,\cdot), v^h \rangle_{D} \quad\forall v_h\in V_r^h,  \\
&& \bigl(a_0 \nab u_n^h(\ome_j,\cdot), \nab v^h)_{D} = - \bigl( \eta(\ome_j,\cdot) \nab u_{n-1}^h(\ome_j,\cdot), \nab v^h \bigl)_{D} \\
&& \qquad \forall v_h\in V_r^h, \quad \mbox{if} \; n\ge 1.
\end{eqnarray*}
\item Set $U^h_N(\omega_j,\cdot)\leftarrow U^h_N(\omega_j,\cdot) +\veps^n u^h_n(\omega_j,\cdot)$.
\item End For
\end{description}
\item Set $\Psi^h_N(\cdot) \leftarrow \Psi^h_N(\cdot) +\frac{1}{M} U^h_N(\omega_j,\cdot)$.
\item End For
\end{description}
\item Output $\Psi^h_N(\cdot)$.
\end{description}

\medskip

The whole algorithm requires one to solve a total of $MN$ linear systems for 
$N$ modes and $M$ realizations for each mode.  Since all linear systems share the same 
coefficient matrix, we only need to perform one $LU$ decomposition of the
matrix and save the lower and upper triangular matrices. The decomposition is then reused to solve the
remaining $MN-1$ linear systems by performing $MN-1$ sets of forward
and backward substitutions. It is straightforward that the computational cost of the whole algorithm
is $O(\frac{3}{2}P^{3d})+O(MNP^{2d})$.  In light of Theorem \ref{err_rN}, a relatively small 
number $N$ of modes is needed to get desired accuracy in practice, since the associated 
residual $r_{N}^{\veps}$ has an order of $\veps^N$. Hence we may regard $N$ as a constant.
To get the same order of errors for the finite element approximation and the Monte Carlo simulation
(see Section \ref{sec-4.2} and \ref{sec-5}), we may choose $M\sim O(P^4)$. Consequently, 
the total cost for implementing the algorithm becomes $O(\frac{3}{2}P^{3d})+O(NP^{2d+4})$.  
As a comparison, a brute force Monte Carlo method for solving the problem \eqref{bvp}--\eqref{bnd_cond} 
with the same number of realization gives rise to $O(\frac{3P^{3d+4}}2)$ multiplications/divisions.
It is seen that the computational cost of the proposed algorithm is significantly reduced by 
the use of the multi-modes expansion and by using the $LU$ decomposition matrices repeatedly.

%%%%%%%%%%%%
\subsection{Convergence analysis}\label{sec-4.2}
In this subsection, we derive the error estimates for the proposed algorithm.
First, it is observed that
$\E(u^\veps)-\Psi^h_N$ can be decomposed as
\begin{equation}\label{err_decomp}
\bigl(\E(u^\veps)-\E(U^\veps_N)\bigr)
+ \bigl( \E(U^\veps_N)- \E(U^h_N)\bigr) +\bigl( \E(U^h_N)-\Psi^h_N \bigr),
\end{equation}
where $U^\veps_N$ and $\Psi^h_N$ are given by \eqref{u_exp_finite} and \eqref{Psi_N} respectively, and 
\begin{align*}
	U^h_N(\omega,x):= \sum_{n=0}^{N-1} \veps^n u_n^h(\omega,x).
\end{align*}
It is clear that the first term in the decomposition \eqref{err_decomp} measures the error due 
to the finite-modes expansion, the second term is the spatial discretization error, and the
third term represents the statistical error due to the Monte Carlo method.

The finite-modes representation error is given in Theorem \ref{err_rN}. That is,
\begin{equation}\label{err_first_term}
\E\bigr(\norm{u^\veps -U^\veps_N}_{H^{1+\sigma}(D)}^2\bigr)
\leq  C_0^{N+1} \veps^{2N} \;\E\bigr(\norm{f}_{H^{-1+\sigma}(D)}^2\bigr).
\end{equation}
Let $\Phi^h_n= \frac{1}{M} \sum_{j=1}^M u_n^h(\omega_j,\cdot)$, then it is clear that
$$
\E(U^h_N)-\Psi^h_N =\sum_{n=0}^{N-1} \veps^n \bigl(\E(u^h_n) -\Phi^h_n \bigr).
$$
With the standard error estimates for the Monte Carlo method (cf. \cite{Babuska_Tempone_Zouraris04, Liu_Riviere13}), the statistical error can be bounded as follows: 
\begin{eqnarray*}
\E\bigl(\norm{\E(U^h_N)-\Psi^h_N}^2_{H^1(D)}\bigr) 
&\leq & 2\sum_{n=0}^{N-1} \veps^{2n} \; \E\bigl(\|\E(u^h_n) -\Phi^h_n\|^2_{H^1(D)} \bigr) \\
&\leq & \frac{2}{M} \sum_{n=0}^{N-1} \veps^{2n} \; \E\bigr(\norm{u_n^h}^2_{H^1(D)}\bigr).
\end{eqnarray*}
From Theorem \ref{unh_est}, by choosing $\veps \leq \min \left \{1, \frac{1}{\sqrt{C_0}} \right\}$, we have
\begin{eqnarray}\label{err_third_term_H1}
\E\bigl(\norm{\E(U^h_N)-\Psi^h_N}^2_{H^1(D)}\bigr)
&\leq & \frac{2C_0}{M} \left(\sum_{n=0}^{N-1} \veps^{2n} C_0^n \right) \E\bigr(\norm{f}^2_{H^{-1+\sigma}(D)}\bigr) \nonumber \\
&\leq & \frac{2C_0}{(1-C_0\veps^2)M} \;\E\bigr(\norm{f}^2_{H^{-1+\sigma}(D)}\bigr).
\end{eqnarray}
%Similarly, it can be shown that
%\begin{equation}\label{err_third_term_L2}
%\E\bigl(\norm{\E(U^h_N)-\Psi^h_N}^2_{L^2(D)}\bigr) \le \frac{2C_0}{(1-C_0\veps^2)M} \;\E\bigr(\norm{f}^2_{H^{-1+\sigma}(D)}\bigr). 
%\end{equation}

In order to estimate the spatial discretization error $\E(U^\veps_N)- \E(U^h_N)$, for each mode, 
let us define an auxiliary function $\tilde{u}^h_n\in V^h_r$ as the solution of the following 
discrete problem: 
\begin{equation}\label{var_pro_uhn_tilde}
\bigl(a_0 \nab\tilde{u}^h_n(\ome_j,\cdot), \nab v^h)_{D} 
= - \bigl(\eta(\ome_j,\cdot) \nab u_{n-1}(\ome_j,\cdot), \nab v^h)_{D}
\end{equation}
for all $v_h\in V_r^h$ and $n\ge1$.
For simplicity, we restrict ourselves to the case of $r=1$. Namely, $v^h$ is a linear polynomial 
on each $K\in\cT_h$.  The case of $r>1$ can be derived similarly, and we omit it for the clarity 
of the exposition.  The standard error estimation technique for the finite element method 
(cf. \cite{Brenner_Scott}) and the energy estimate \eqref{un_est} yield
\begin{eqnarray}\label{err_unh1}
%\E\bigl(\norm{u_n-\tilde{u}_n^h}_{L^2(D)}\bigr) &\leq & C h^{1+\sigma} \; \E\bigl(\norm{u_n}_{H^{1+\sigma}(D)}\bigr) \nonumber \\
%&\leq& C \big(\sqrt{C_0}\big)^{n+1} h^{1+\sigma} \; \E\bigl(\norm{f}_{H^{-1+\sigma}(D)}\bigr); \label{err_unh1_L2} \\
\E\bigl(\norm{u_n-\tilde{u}^h_n}_{H^1(D)}\bigr) &\leq & C h^{\sigma}\; \E\bigl(\norm{u_n}_{H^{1+\sigma}(D)}\bigr) \nonumber \\
&\leq& C \big(\sqrt{C_0} \big)^{n+1}  h^\sigma\; \E\bigl(\norm{f}_{H^{-1+\sigma}(D)}\bigr). \label{err_unh1_H1}
\end{eqnarray}
Next we estimate the error $\E\bigl(\norm{\tilde{u}_n^h-u_n^h}_{H^1(D)}\bigr)$. % and $\E\bigl(\norm{\tilde{u}_n^h-u_n^h}_{L^2(D)}\bigr)$.

Recall that for $n\geq 1$, $u^h_n\in V^h_r$ is defined by
\begin{equation}\label{var_pro_uhn}
\bigl(a_0 \nab u_n^h(\ome_j,\cdot), \nab v^h)_{D} = - \bigl(\eta(\ome_j,\cdot) \nab u_{n-1}^h(\ome_j,\cdot), \nab v^h)_{D}
\quad\forall v_h\in V_r^h.
\end{equation}
For each fixed sample $w=w_j$, a direct comparison of \eqref{var_pro_uhn_tilde} and \eqref{var_pro_uhn} gives rise to
$$ \bigl(a_0 (\nab \tilde{u}_n^h- \nab u_n^h), \nab v^h)_{D} = - \bigl(\eta (\nab u_{n-1}-\nab u_{n-1}^h), \nab v^h)_{D}
\quad\forall v_h\in V_r^h. $$
By setting $v_h=\tilde{u}_n^h- u_n^h$ and using the Cauchy-Schwarz inequality, it follows that
\begin{equation}\label{est_CS}
\E\bigl(\norm{\nab\tilde{u}_n^h-\nab u_n^h}_{L^2(D)}\bigr) \le \beta\; \E\bigl(\norm{\nab u_{n-1}-\nab u_{n-1}^h}_{L^2(D)}\bigr),
\end{equation}
where $\beta=\dfrac{1}{\min_{x\in \bar{D}}{a_0(x)}}$.
An application of the Poincar\'e-Friedrichs inequality leads to the estimates
\begin{eqnarray}\label{err_unh2}
%\E\bigl(\norm{\tilde{u}_n^h-u_n^h}_{L^2(D)}\bigr) & \le & \beta_0\; \E\bigl(\norm{\nab u_{n-1}-\nab u_{n-1}^h}_{L^2(D)}\bigr), \label{err_unh2_L2} \\
\E\bigl(\norm{\tilde{u}_n^h-u_n^h}_{H^1(D)}\bigr) & \le & \beta_1\; \E\bigl(\norm{\nab u_{n-1}-\nab u_{n-1}^h}_{L^2(D)}\bigr). \label{err_unh2_H1}
\end{eqnarray}
Here $\beta_0$ and $\beta_1$ are suitable constants depending on $a_0(x)$ and the domain $D$ only.

In light of  \eqref{err_unh1_H1} and \eqref{err_unh2_H1}, we see that
\begin{eqnarray*}
\E\bigl(\norm{u_n-u_n^h}_{H^1(D)}\bigr)  & \leq &  C \big( \sqrt{C_0} \big)^{n+1} h^\sigma\; \E\bigl(\norm{f}_{H^{-1+\sigma}(D)}\bigr) +   \beta_1\; \E\bigl(\norm{\nab u_{n-1}-\nab u_{n-1}^h}_{L^2(D)}\bigr) \\
& \leq &  C \big( \sqrt{C_0} \big)^{n+1} h^\sigma\; \E\bigl(\norm{f}_{H^{-1+\sigma}(D)}\bigr) +   \beta_1\; \E\bigl(\norm{u_{n-1}-u_{n-1}^h}_{H^1(D)}\bigr).
\end{eqnarray*}
By applying the above inequality recursively, it is obtained that
\begin{align}\label{err_unh_rec}
\E\bigl(\norm{u_n-u_n^h}_{H^1(D)}\bigr)   &\leq   C h^\sigma\; \E\bigl(\norm{f}_{H^{-1+\sigma}(D)}\bigr)  \sum_{j=0}^{n-1} \beta_1^j \big(\sqrt{C_0}\big)^{n+1-j} \\  
& \qquad +   \beta_1^n\; \E\bigl(\norm{u_{0}-u_{0}^h}_{H^1(D)}\bigr) \notag.
\end{align}
Note that $u_0$ and $u_0^h$ solves \eqref{u0_pde} and \eqref{u0h_bvp} respectively, hence
\begin{equation}\label{err_u0h_rec}
 \E\bigl(\norm{u_{0}-u_{0}^h}_{H^1(D)}\bigr)  \le C h^\sigma\; \E\bigl(\norm{u_0}_{H^{1+\sigma}(D)}\bigr) \leq
C \sqrt{C_0} h^\sigma \; \E\bigl(\norm{f}_{H^{-1+\sigma}(D)}\bigr).
\end{equation}
We arrive at
\begin{equation}\label{err_unh}
\E\bigl(\norm{u_n-u_n^h}_{H^1(D)}\bigr)   \leq   C h^\sigma\; \E\bigl(\norm{f}_{H^{-1+\sigma}(D)}\bigr)  \sum_{j=0}^{n} \beta_1^j \big(\sqrt{C_0} \big)^{n+1-j}
\end{equation}
by substituting  \eqref{err_u0h_rec} into \eqref{err_unh_rec}. Correspondingly,
\begin{eqnarray}\label{err_seond_term_H1}
\E\bigl(\norm{U_N^\veps-U_N^h}_{H^1(D)}\bigr)  & \leq &  C h^\sigma\; \E\bigl(\norm{f}_{H^{-1+\sigma}(D)}\bigr)  \sum_{n=0}^{N-1}\sum_{j=0}^{n} \veps^{n}\beta_1^j \big( \sqrt{C_0} \big)^{n+1-j} \nonumber \\
& \leq & C_1(\veps,N) \;h^\sigma\; \E\bigl(\norm{f}_{H^{-1+\sigma}(D)}\bigr),
\end{eqnarray}
where $C_1(\veps,N):=\dfrac{C \sqrt{C_0}}{\sqrt{C_0}-\beta_1}\left[\sqrt{C_0}\dfrac{1-(\veps \sqrt{C_0})^N}{1-\veps \sqrt{C_0}}-\beta_1\dfrac{1-(\veps \beta_1)^N}{1-\veps \beta_1}\right]$.\\

Combining \eqref{err_first_term}, \eqref{err_third_term_H1}, and \eqref{err_seond_term_H1}, 
% \eqref{err_third_term_L2}, and \eqref{err_seond_term_L2},
we get the following error estimate for the full algorithm.

\begin{thm}\label{err_mme}
For a given source function $f\in L^2(\Ome, H^{-1+\sigma}(D))$ with $\sigma\in(0,1]$,
let $\Psi_N^h$ be the numerical solution obtained in the Main Algorithm
with $r=1$. There holds
\begin{equation*}
\E\bigl( \|\E(u^\veps) - \Psi_N^h\|_{H^1(D)}\bigr) \leq C  (\veps^N +h^\sigma + M^{-\frac12}) \; \E\bigl(\norm{f}_{H^{-1+\sigma}(D)}\bigr).
\end{equation*}
for some positive constant $C$ independent of $\veps$, $h$, $M$ and $N$.
\end{thm}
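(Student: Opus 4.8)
The plan is to bound $\E(\|\E(u^\veps)-\Psi_N^h\|_{H^1(D)})$ by applying the triangle inequality to the three-term splitting \eqref{err_decomp} and then invoking the three estimates already assembled in this subsection, one per error source. Two preliminary reductions would be recorded first. Since the first two summands of \eqref{err_decomp} are deterministic (each being an expectation of a random field), Jensen's inequality for Bochner integrals gives $\|\E(v)\|_{H^1(D)}\le\E(\|v\|_{H^1(D)})$, so no extra work is needed to bring them into the form $\E(\|\cdot\|_{H^1(D)})$. To convert the squared estimates of Theorem \ref{err_rN} and \eqref{err_third_term_H1} into bounds on the expected norm, I would use the scalar Jensen inequality $\E(X)\le(\E(X^2))^{1/2}$ together with the continuous embedding $\|\cdot\|_{H^1(D)}\le\|\cdot\|_{H^{1+\sigma}(D)}$.

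For the finite-modes term I would take square roots in \eqref{err_first_term}, obtaining a bound proportional to $\sqrt{C_0}\,(\sqrt{C_0}\,\veps)^N\,(\E(\|f\|^2_{H^{-1+\sigma}(D)}))^{1/2}$, which carries the $\veps^N$ decay. The spatial term is precisely \eqref{err_seond_term_H1}, contributing $C_1(\veps,N)\,h^\sigma\,\E(\|f\|_{H^{-1+\sigma}(D)})$; here I would check that, because both geometric ratios $\veps\sqrt{C_0}$ and $\veps\beta_1$ are strictly less than one, the bracketed sums defining $C_1(\veps,N)$ converge as $N\to\infty$, so that $C_1(\veps,N)$ is bounded above uniformly in $N$. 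For the statistical term I would take square roots in \eqref{err_third_term_H1} and bound $(1-C_0\veps^2)^{-1/2}$ by a constant, yielding the $M^{-1/2}$ decay. Adding the three contributions and collecting all $C_0$-, $\beta_1$-, and $\sigma$-dependent factors into a single constant $C$ produces the asserted inequality.

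The structure is otherwise routine assembly, so the only genuinely delicate point is the uniformity in $N$ of the final constant. The statistical and spatial prefactors, namely $(1-C_0\veps^2)^{-1/2}$ and $C_1(\veps,N)$, are controlled uniformly in $N$ exactly because the smallness hypothesis $\veps<\min\{1,1/\sqrt{C_0}\}$ forces the relevant geometric series to converge. The finite-modes term is more subtle: its natural bound is $\sqrt{C_0}\,(\sqrt{C_0}\,\veps)^N$, and rewriting this as $C\,\veps^N$ with an $N$-independent $C$ relies on absorbing the residual factor $(\sqrt{C_0})^N$---unproblematic when $C_0\le1$, and otherwise to be read in the regime, advocated after the Main Algorithm, where $N$ is held fixed and a small number of modes already delivers the target accuracy. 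This is where I would concentrate the care of the argument; the passage between the squared estimates and the unsquared norm appearing in the statement is handled once and for all by the scalar Jensen inequality noted above.
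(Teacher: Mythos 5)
Your proposal is correct and follows essentially the same route as the paper, whose proof consists precisely of combining \eqref{err_first_term}, \eqref{err_third_term_H1}, and \eqref{err_seond_term_H1} through the decomposition \eqref{err_decomp}; the Jensen-inequality reductions you make explicit are left implicit in the paper. Your observation that the finite-modes bound is really $\sqrt{C_0}\,(\sqrt{C_0}\,\veps)^N$ rather than $C\,\veps^N$ with $N$-independent $C$ (and likewise that $C_1(\veps,N)$ needs $\veps\beta_1<1$ to be uniformly bounded) is a legitimate point of care that the paper glosses over, not a defect of your argument.
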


\begin{rmk}
For the $L^2$-norm error $\E\bigl( \|\E(u^\veps) - \Psi_N^h\|_{L^2(D)}\bigr)$ of the whole algorithm, 
it is expected that an order of $O\left(\veps^N +h^{1+\sigma} + M^{-\frac12}\right)$ can be achieved, 
as predicted by the numerical results in Section \ref{sec-5} (see Table \ref{table:con_order_h_1d}).
\end{rmk}

%%%%%%%%%%%%%
\section{Numerical experiments}\label{sec-5}
In this section, we present a series of numerical experiments to illustrate the accuracy 
and efficiency of the proposed method.  Section \ref{sec-5.1} studies the accuracy of the method 
for solving one-dimensional problems, where the analytical solution is known and hence can be 
used for comparison. The application of the numerical algorithm to 
two-dimensional problems is  elaborated in Section \ref{sec-5.2}.

\subsection{One-dimensional examples}\label{sec-5.1}
We consider the following boundary value problem:
\begin{align*}
-\frac{d}{dx}\Bigl( \bigl(1+\veps Y(\omega) \bigr)  \frac{du^\veps (\omega,x)}{dx}\Bigr) 
&= Y(\omega), \qquad 0<x<1, \\
u^\veps(\omega,0) = 0,  \quad u^\veps(\omega,1) &= 0,
\end{align*}
where $Y(\omega)$ is a uniformly distributed random variable over $[0,1]$. The analytical 
solution for the boundary value problem takes the form 
$u(x,\omega)=\dfrac{Y(\omega)}{2(1+\veps Y(\omega))}(x-x^2)$, and its expectation is 
$\E(u^\veps) = \dfrac{1}{2}\left(\dfrac{1}{\veps} - \dfrac{1}{\veps^2}\ln(1+\veps) \right) (x-x^2)$.

To test the validity of the multi-modes expansion and the accuracy of the numerical algorithm, 
we set $h=0.01$ for the spatial discretization and $M=10^6$  for the number of realizations. 
Table \ref{table:MME_Error_1d} displays the accuracy of the approximation for various $\veps$ 
and different number of modes, where the relative $L^2$-norm is defined as 
$\|\E(u^\veps) - \Psi_N^h\|_{L^2(D)} /\|\E(u^\veps) \|_{L^2(D)}$.  It is observed that the 
multi-modes Monte-Carlo finite element method gives rise to accurate approximation as long as 
the magnitude of the random perturbation is not large. As expected, more modes are required 
to suppress the errors as the magnitude of the perturbation $\veps$ increases.

\begin{table}[!htbp]
\centering
\begin{tabular}{| c || c | c | c | c | c |}
	\hline
	$\veps$ & $N = 2$ & $N = 3$ & $N = 4$ & $N = 5$  & $N=6$ \\
	\hline
	\hline
	$0.2$ & $1.95 \times10^{-2} $ & $ 3.15 \times10^{-3} $ & $4.74 \times10^{-4} $ & $1.45\times10^{-4}$  & $ 6.40\times10^{-5}$ \\
	\hline
	$0.4$ & $7.66 \times10^{-2} $ & $ 2.42 \times10^{-2} $ & $8.05 \times10^{-3} $ & $2.71\times10^{-3}$  & $9.84\times10^{-4}$\\
	\hline
	$0.6$ & $0.1688$ & $0.0806$ & $0.0391$ & $0.0208$  & $0.0100$  \\
	\hline
	$0.8$ & $ 0.2960$ & $0.1869$ & $0.1222$ & $0.0839$ & $0.0574$  \\
	\hline
\end{tabular}
\caption{Relative $L^2$-norm error for the multi-modes Monte Carlo finite element 
approximation $\Psi^h_N$ with different $\veps$ and $N$.}\label{table:MME_Error_1d}
\end{table}

Next we study the convergence rate of the proposed algorithm numerically.   
Note that the whole error consist of three parts as given in \eqref{err_decomp}.
The statistical error term arising from the Monte Carlo method is standard and we omit here.
In order to test the error term associated with the spatial discretization, we use large 
numbers of Monte Carlo realizations and adopt high-order mode expansion
such that the total error of the whole algorithm is dominated by the spatial discretization error. 
To this end, we fix $\veps=0.5$ in the following and set $N=10$, $M=10^6$ respectively. 
The $H^1$ and $L^2$-norm errors for the multi-modes Monte Carlo finite element approximation $\Psi^h_N$
are shown in Table \ref{table:con_order_h_1d}. It is observed that a convergence rate of $O(h)$
is obtained for the numerical solution with respect to the $H^1$-norm. 
Note that $f\in L^2(\Omega,L^2(D))$ in this example, hence the numerical convergence rate 
is consistent with the theoretical one as obtained in Theorem \ref{err_mme}.
Furthermore, it is seen that the $L^2$-norm error exhibits a convergence rate of $O(h^2)$.
%which is higher than the theoretical one in Theorem \ref{err_mme}.
%We believe that, in the proof of Theorem \ref{err_mme}, the estimation 
%of \eqref{est_CS} by the Cauchy-Schwarz inequality is not optimal and needs to be improved.
%This will be further investigated and reported elsewhere.

\begin{table}[!htbp]
\centering
\begin{tabular}{| c || c | c || c | c |}
	\hline
	$ h $ & $ \|\E(u^\veps) - \Psi_N^h\|_{H^1(D)} $ & order & $\|\E(u^\veps) - \Psi_N^h\|_{L^2(D)} $ & order \\
	\hline
	\hline
	$0.2$ &  $2.19\times10^{-1}$ & & $1.38 \times 10^{-3}$ &    \\
	\hline
	$0.1$ & $ 1.09 \times 10^{-2}$ & $1.00$  &  $3.29 \times 10^{-4}$ & $2.07$    \\
	\hline
	$0.05$ & $ 5.46 \times 10^{-3} $ & $1.00$ & $7.43 \times 10^{-5}$ & $2.15$    \\
	\hline
	$0.025$  & $2.73 \times 10^{-3}$ & $1.00$ & $1.28 \times 10^{-5}$ & $2.53$   \\
	\hline
\end{tabular}
\caption{ $H^1$ and $L^2$-norm errors for the multi-modes Monte Carlo finite element 
approximation $\Psi^h_N$ with decreasing $h$, and the corresponding numerical convergence orders.
}\label{table:con_order_h_1d}
\end{table}

\begin{figure}[!htbp]
\centering
\includegraphics[height=4.5cm,width=7cm]{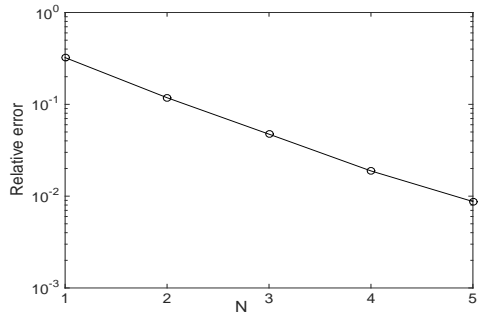}
\caption{Relative $H^1$-norm error for the multi-modes Monte Carlo finite element approximation 
$\Psi^h_N$ when different number of modes is used; $\veps=0.5$.}\label{fig:err_sigma_05_diff_N_1d}
\end{figure}

To study the convergence rate for the finite modes expansion, we fix $\veps=0.5$ and set $h=0.01$, 
$M=10^6$ respectively. For $N\le 4$, the error due to truncation of modes becomes dominant.
Figure \ref{fig:err_sigma_05_diff_N_1d} displays the relative $H^1$-norm error for different modes.
As expected, as number of modes increases, the multi-modes Monte Carlo finite element approximation 
$\Psi^h_N$ becomes more accurate and a convergence rate of $O(\veps^{N})$ is observed.
This is consistent with the theoretical error estimation in Theorem \ref{err_mme}.

\subsection {Two-dimensional examples}\label{sec-5.2}
We consider solving the two-dimensional random elliptic problem:
\begin{alignat*}{2}
-\nabla \cdot \bigl(a(\omega,\cdot)\nabla u^\veps (\omega,\cdot)\bigr) 
&= f(\omega,\cdot) &&\qquad \mbox{in } D, \\
u^\veps(\omega,\cdot) &= 0 &&\qquad \mbox{on } \partial D,
\end{alignat*}
where the spatial domain is $D =(0,2)\times(0,2)$.
The background diffusion coefficient $a_0(x_1,x_2)=1$. The random perturbation $\eta(\omega,x)$ 
and the source function $f(\omega,x)$ are given by
\begin{align*}
\eta(\ome,x) &=0.5+0.5\sum_{m=1}^{M_\eta}\sum_{n=1}^{N_\eta} e^{-0.2(m^2+n^2)} 
\phi_{m,n}(x_1,x_2)Y_{m,n}(\ome), \\
f(\ome,x) &=x_1^2+x_2^2+\sum_{m=1}^{M_f}\sum_{n=1}^{N_f} 2e^{-0.2(m^2+n^2)} 
\psi_{m,n}(x_1,x_2)Z_{m,n}(\ome),
\end{align*}
respectively. Here $Y_{1,1}$, $\cdots$, $Y_{M_\eta,N_\eta}$
are independent uniformly distributed random variables over $[-1,1]$, and $Z_{1,1}$, $\cdots$, 
$Z_{M_f,N_f}$ are independent normally  distributed random variables with mean $0$ and variance $1$. 
The basis functions are given by 
\begin{align*}
\phi_{m,n}(x_1,x_2) &=\cos(m\pi(x_1-1))\cos(n\pi(x_2-1)), \\
\psi_{m,n}(x_1,x_2) &=\sin(m\pi(x_1-1))\sin(n\pi(x_2-1)). 
\end{align*}
We set $M_\eta=N_\eta=10$, and $M_f=N_f=5$ in the following numerical tests. 
From a simple calculation, it can be shown that $-0.6\le\eta\le1.6$ for the specified parameters.

\begin{table}[!htbp]
\centering
\begin{tabular}{| c | c |}
	\hline
	Approximation & CPU Time (s) \\
	\hline
	$\tilde{\Psi}^h$ & $3.8077 \times 10^5$ \\
	\hline
	$\Psi^h_2$ & $1.000 \times 10^4$ \\
	\hline
	$\Psi^h_3$ & $1.313 \times 10^4$ \\
	\hline
	$\Psi^h_4$ & $1.624 \times 10^4$ \\
	\hline
	$\Psi^h_5$ & $1.957\times 10^4$ \\
	\hline
\end{tabular}
\caption{CPU time required to compute the classical Monte Carlo finite element approximation 
$\tilde{\Psi}^h$ and the multi-modes Monte Carlo finite element approximation $\Psi^h_N$ .} 
\label{table:cpu_time}
\end{table}

\begin{figure}[!htbp]
\centering
\includegraphics[height=5.2cm,width=8cm]{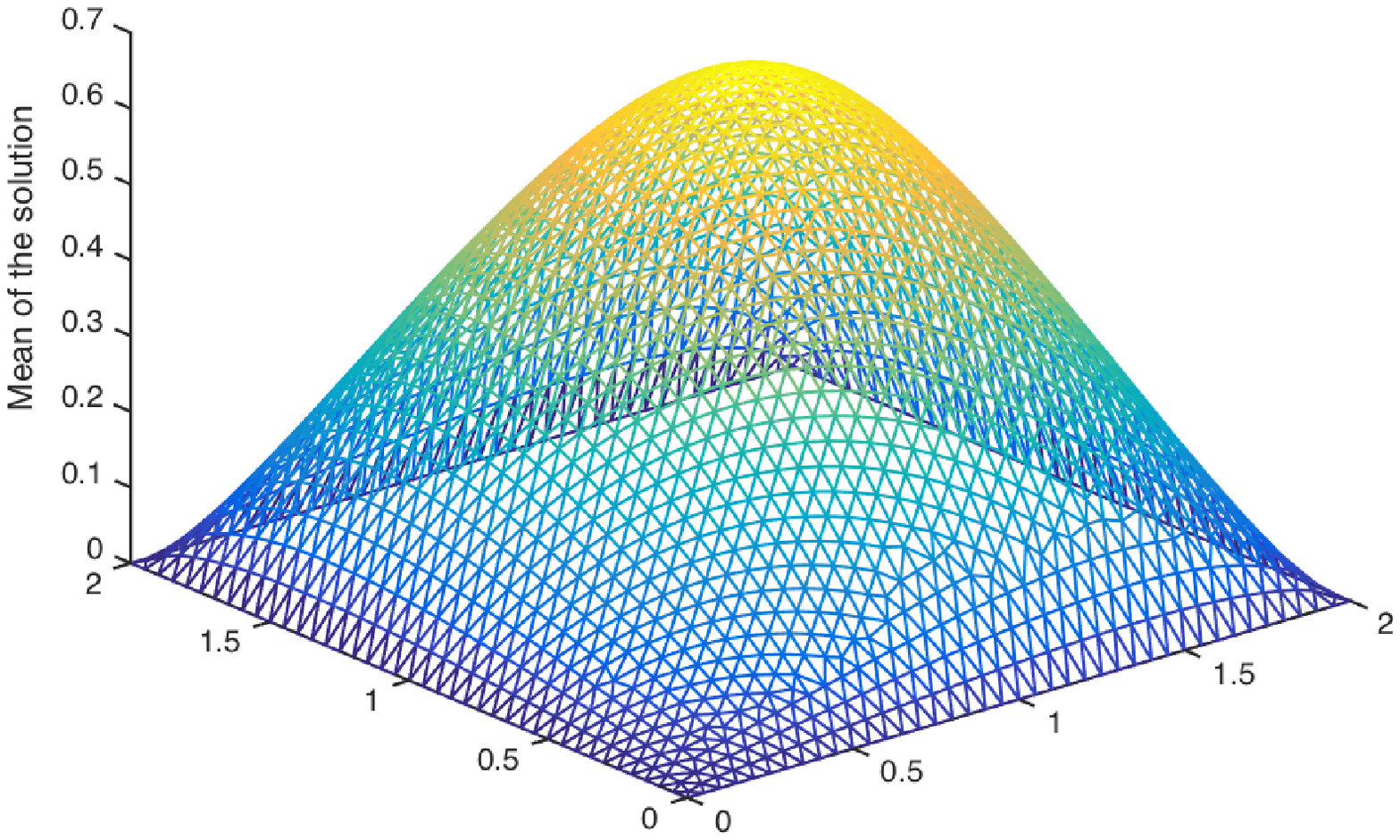}
\includegraphics[height=5.2cm,width=8cm]{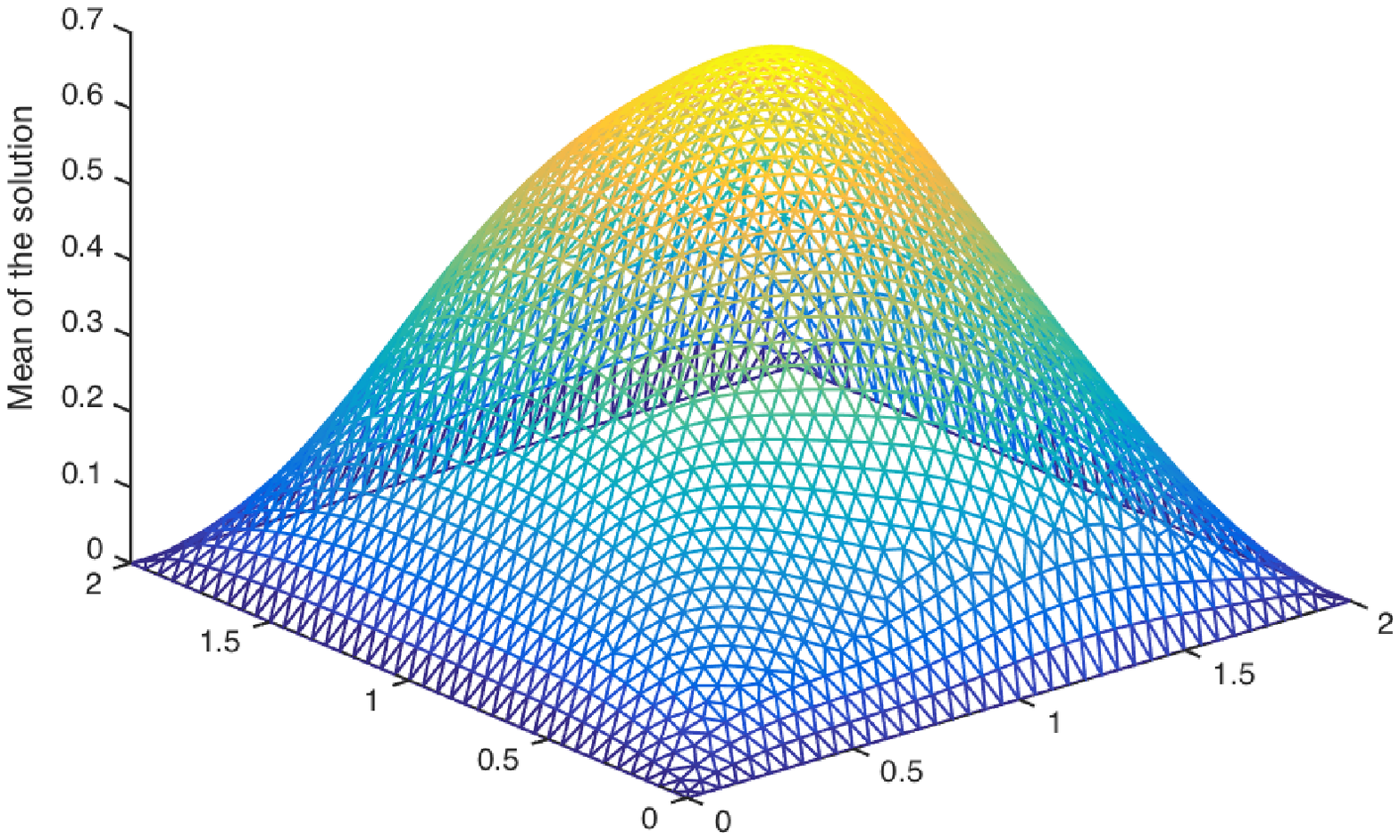}
\caption{The sample average $\Psi_5^h$ (left) and one sample $U_5^h$ (right) computed for 
$\veps = 0.5$, and $M=1000$.}
\label{fig:surf_plot_sigma_05}
\end{figure}

To partition $D$, we use a quasi-uniform triangulation $\mathcal{T}_h$ with size $h$.
The number of realizations for the Monte Carlo method is set as $M=10000$.
As a benchmark, we compare the multi-modes Monte Carlo method to 
the classical Monte Carlo finite element method (i.e. without utilizing the multi-modes expansion). 
Let us denote the numerical approximation to $\mathbb{E}(u)$ using the 
classical  Monte Carlo method by $\tilde{\Psi}^h$. 

In order to test the efficiency of the multi-modes Monte Carlo method, we set $h=0.2$ and compare 
the CPU time for computing $\Psi^h_N$ and $\tilde{\Psi}^h$.  Both methods are implemented sequentially
in Matlab on a Dell T7600 workstation.  The results of this test are shown in 
Table \ref{table:cpu_time}.  We find that the use of the multi-modes 
expansion improves the CPU time for the computation considerably.  In fact, the table shows 
that this improvement is an order of magnitude.  Also, as expected, as the number of modes 
used is increased the CPU time increases in a linear fashion.

To give an illustration of computed solutions, we show the sample average $\Psi_N^h$
and one computed sample $U_N^h$ for $\veps=0.5$ and $\veps=0.8$
in Figure \ref{fig:surf_plot_sigma_05} and Figure \ref{fig:surf_plot_sigma_08} respectively. Here
 $N=5$ is used for the multi-modes expansion and $h=0.05$ is set for the spatial discretization.

\begin{figure}[!htbp]
\centering
\includegraphics[height=5.2cm,width=8cm]{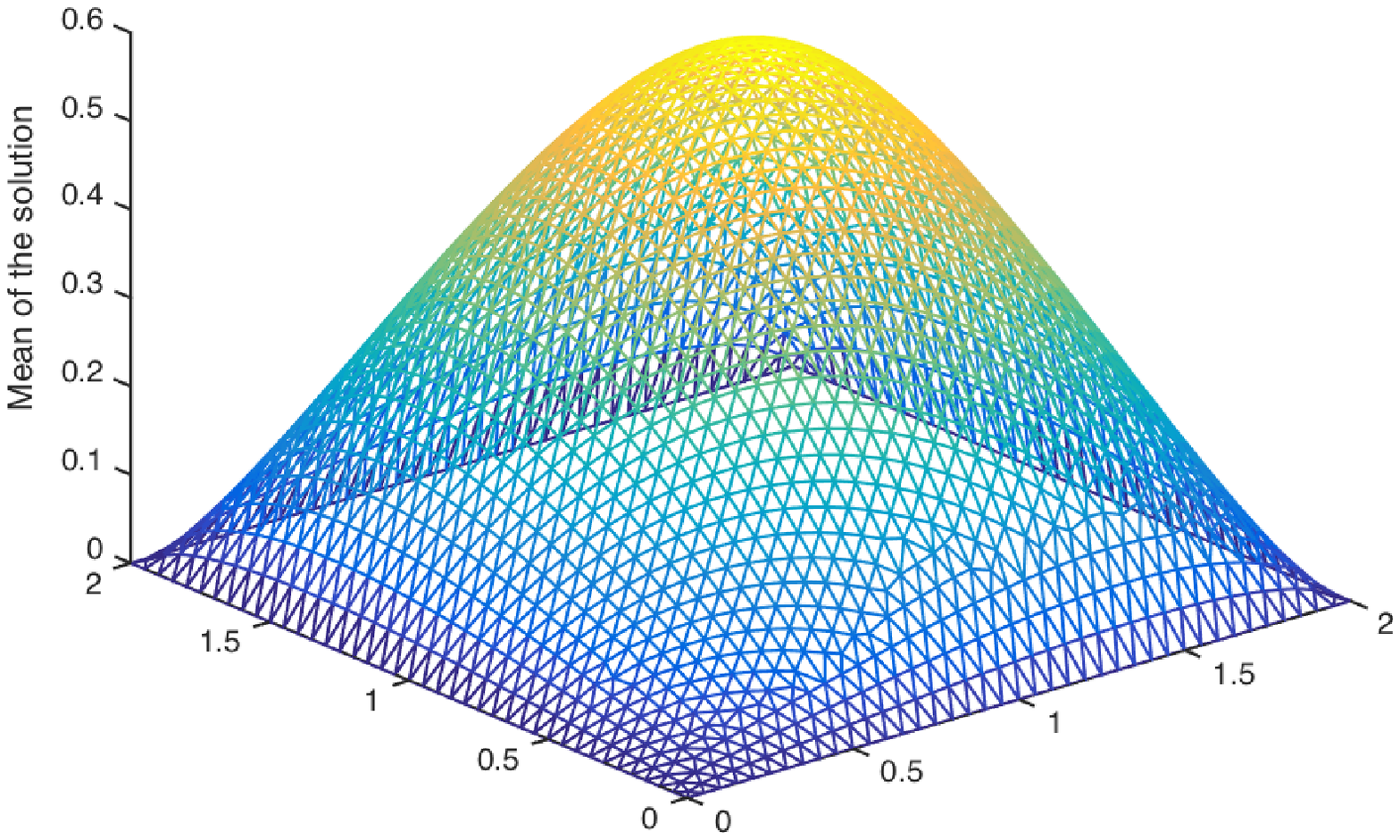}
\includegraphics[height=5.2cm,width=8cm]{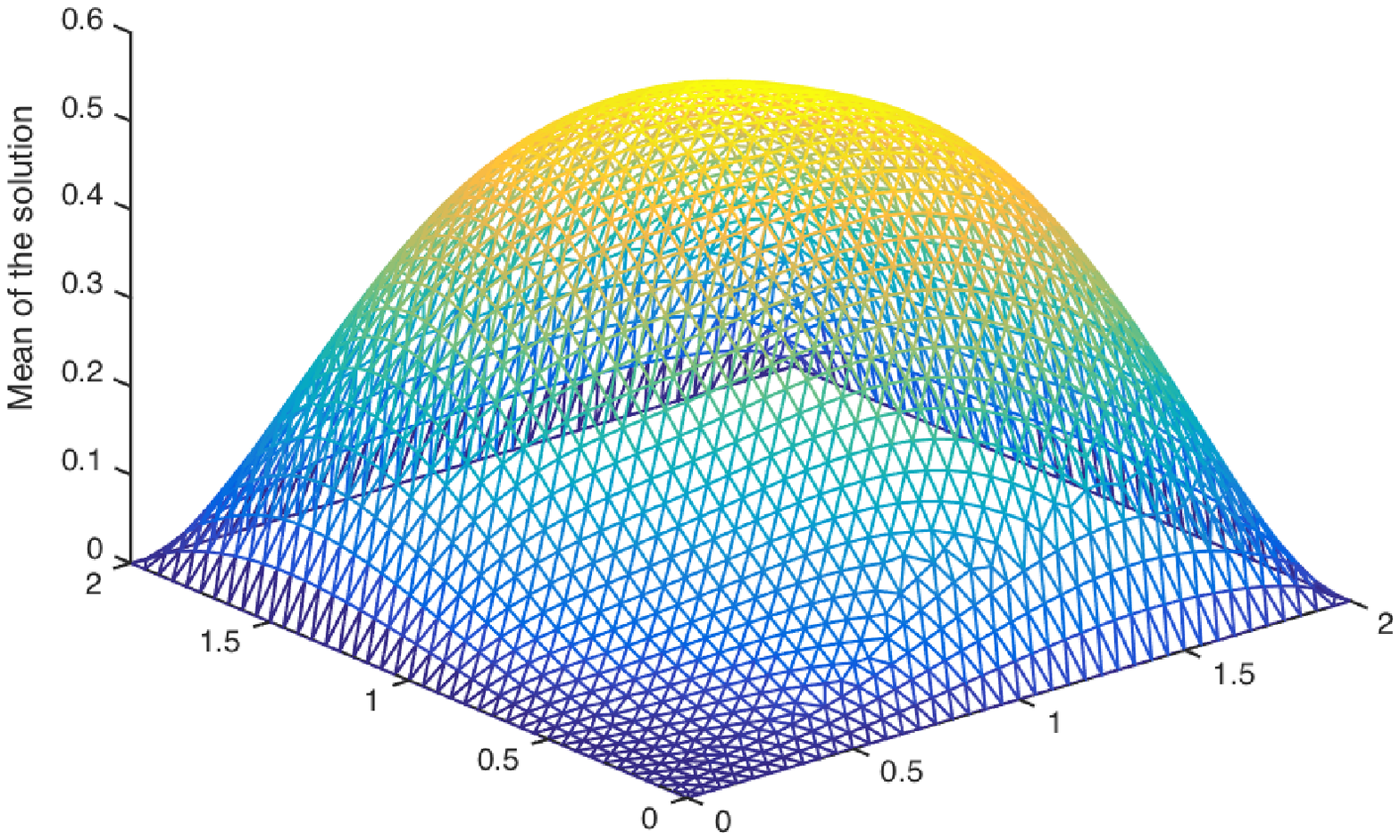}
\caption{The sample average $\Psi_5^h$ (left) and one sample $U_5^h$ (right) computed for
$\veps = 0.8$, and $M=1000$.}
\label{fig:surf_plot_sigma_08}
\end{figure}

\begin{figure}[!htbp]
\centering
\includegraphics[height=5cm,width=8cm]{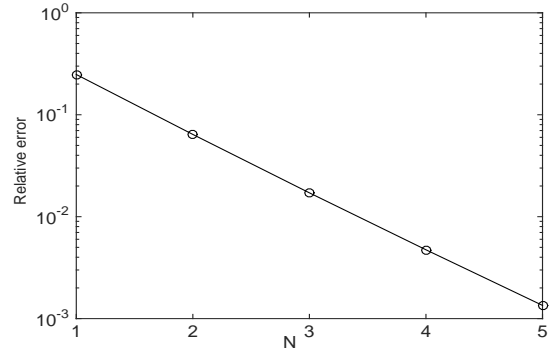}
\caption{Relative $L^2$-norm error between $\Psi^h_N$  
and $\tilde{\Psi}^h$ when different number of modes is used; $\veps=0.5$.}
\label{fig:err_sigma_05_diff_N_2d}
\end{figure}

\begin{figure}[!htbp]
\centering
\includegraphics[height=5cm,width=8cm]{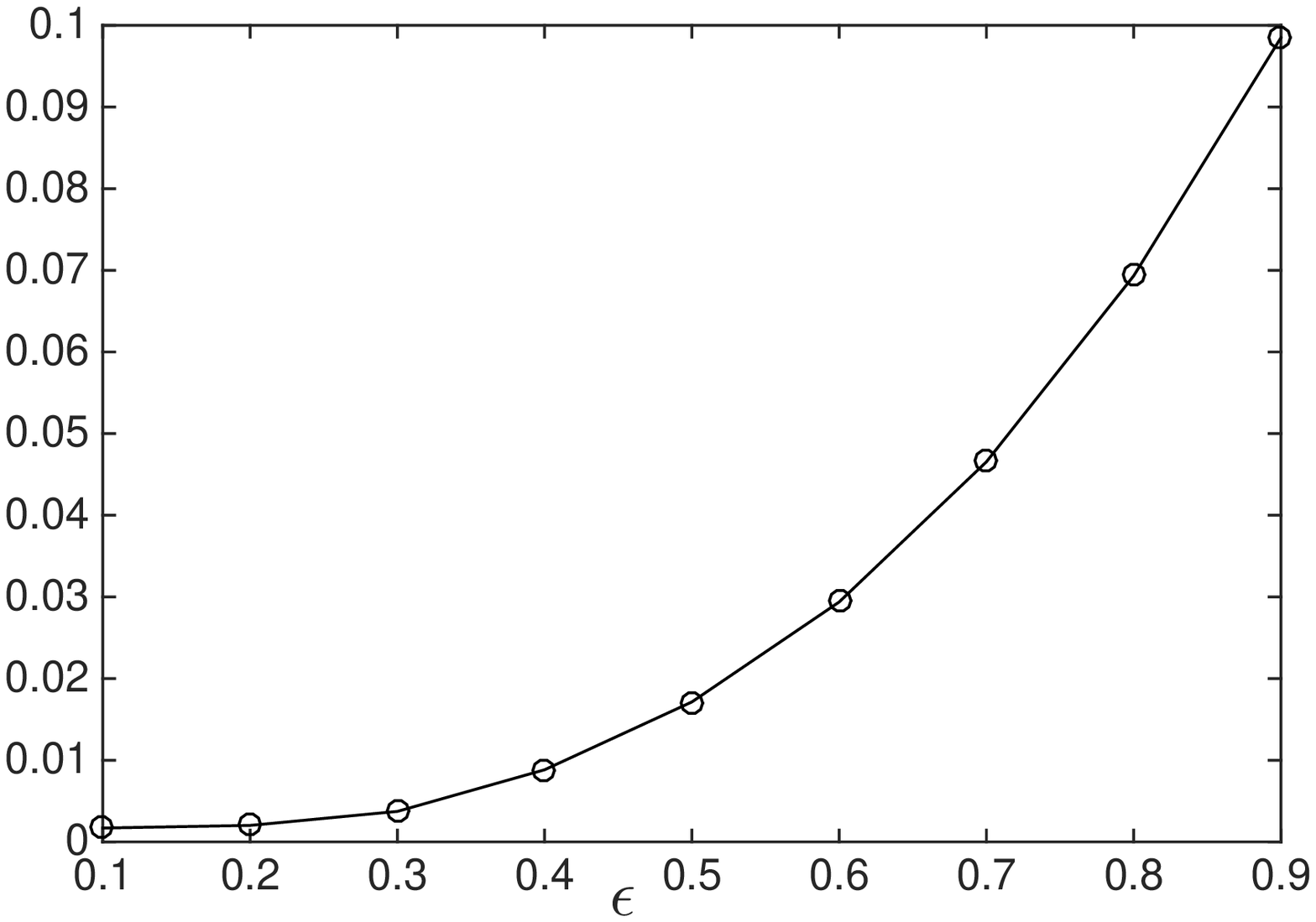}
 \caption{Relative $L^2$-norm error between $\Psi^h_N$  and $\tilde{\Psi}^h$ as $\veps$ increases. 
 $N=3$ is fixed for the multi-modes Monte-Carlo finite element method.} 
\label{fig:err_diff_sigma_N_3_2d}
\end{figure}

Next, we test the accuracy of the multi-modes Monte Carlo finite element approximation by using
the standard Monte Carlo approximation $\tilde{\Psi}^h$ as the reference. To this end,
the relative $L^2$-norm error $\| \Psi^h_N - \tilde{\Psi}^h\|_{L^2(D)}/\|\tilde{\Psi}^h\|_{L^2(D)}$ are 
computed for various $\veps$ and different numbers of modes $N$.
For clarity, we fix $h=0.05$ for the spatial triangulation in this test. If $\veps=0.5$ 
is fixed, then the relative $L^2$-norm errors for different modes are shown in 
Figure \ref{fig:err_sigma_05_diff_N_2d}.  Similar to the one-dimensional case, it is seen that 
as $N$ increases, the difference between $\Psi^h_N$ and $\tilde{\Psi}^h$ decreases steadily, 
and a rate of $O(\veps^{N})$ for the error is also observed. Moreover, if the number of modes 
used in the expansion is fixed as $N=3$, the $L^2$-norm relative errors for $\veps$ ranges 
from $0.1$ to $0.9$ are plotted in Figure \ref{fig:err_diff_sigma_N_3_2d}. We see that even 
with three modes, the multi-modes Monte-Carlo finite element method already yields accurate 
approximation as long as the magnitude of the random perturbation is not large. As expected, 
more modes are required in the expansion to obtain more accurate solutions as $\veps$ increases.
This is confirmed in Table \ref{table:MME_VS_MC_Error}, where the accuracy of the approximation
for various $\veps$ and $N$ are displayed. It is noted that the accuracy for the case of $\veps=0.2$
does not get improved  as $N$ increase from $4$ to $5$. This is due to the fact that the total 
error of the whole algorithm is dominated by the error of spatial discretization when $N=5$.

\begin{table}[!htbp]
\centering
\begin{tabular}{| c || c | c | c | c |}
	\hline
	$\veps$ & $N = 2$ & $N = 3$ & $N = 4$ & $N = 5$ \\
	\hline
	\hline
	$0.2$ & $0.0104$ & $0.0020$ & $0.0016$ & $0.0016$ \\
	\hline
	$0.4$ & $0.0416$ & $0.0088$ & $0.0026$ & $0.0016$ \\
	\hline
	$0.6$ & $0.0923$ & $0.0294$ & $0.0101$ & $0.0036$   \\
	\hline
	$0.8$ & $0.1632$ & $0.0693$ & $0.0309$ & $0.0138$   \\
	\hline
\end{tabular}
\caption{Relative $L^2$-norm error between the multi-modes Monte Carlo finite element approximation 
$\Psi^h_N$ and the classical Monte Carlo finite element approximation $\tilde{\Psi}^h$ 
for different $\veps$ and $N$.}\label{table:MME_VS_MC_Error}
\end{table}

%%%%%%%%%%\
\section{Generalization of the algorithm to general media}\label{sec-6}
To use the multi-modes Monte Carlo finite element method we developed above, it requires that 
the random media are weak in the sense that the leading coefficient $a$ in the PDE has
the form $a(\omega,x)=a_0(x) +\veps\eta(\omega,x)$ and $\veps$ is not large.
For more general random elliptic PDEs, their leading coefficients may not have 
the required ``weak form". A natural question is whether the above multi-modes Monte Carlo 
finite element method can be extended to cover these random PDEs in which
the diffusion coefficient $a(\omega,x)$ does not have the required form. A short answer to this question is yes.
The main idea for overcoming this difficulty is first to rewrite $a(x,\omega)$ into the 
desired form $a_0(x)+\veps \eta(\omega,x)$, then to apply the above ``weak" field framework.
There are at least two ways to do such a re-writing, the first one is to utilize the 
well-known Karhunen-Lo\`eve expansion and the second is to use  
a recently developed stochastic homogenization theory \cite{DGO}. Since the second approach is 
more involved and lengthy to describe, below we only outline the first approach.

In many scenarios of geoscience and material science, the random media can be described by a Gaussian random field
\cite{FGPS, Ishimaru, Lord_Powell_Shardlow}. 
%It is well known that any Gaussian random field $a(x)$ is uniquely determined by its mean and covariance function. 
Let $\overline{a}(x)$ and $C(x,y)$ denote the mean and covariance 
function of the Gaussian random field $a(\ome,x)$, respectively. Two widely used covariance functions in geoscience and materials
science are $C(x,y)=\exp(|x-y|^m/\ell)$ for $m=1,2$ and $0<\ell<1$ 
(cf. \cite[Chapter 7]{Lord_Powell_Shardlow}. Here $\ell$ is often called correlation length which 
determines the range (or frequency) of the noise.  The well-known Karhunen-Lo\`eve expansion 
for $a(\ome,x)$ takes the following form (cf. \cite{Lord_Powell_Shardlow}):
\[
a(\omega,x )= \overline{a}(x) + \sum_{k=1}^\infty \sqrt{\lambda_k} \phi_k(x) \xi_k(\omega),
\]
where $\{(\lambda_k, \phi_k)\}_{k\geq 1}$ is the eigenset of the (self-adjoint) covariance operator and
$\{\xi_k  \sim  N(0,1) \}_{k\geq 1}$ are i.i.d. random variables. It turns out in many cases there holds
$\lambda_k=O(\ell^r)$ for some $r>1$ depending on the spatial domain $D$ where the PDE is defined 
(cf. \cite[Chapter 7]{Lord_Powell_Shardlow}). Consequently, we can write
\[
a(\omega,x)=\overline{a}(x)+ \sqrt{\lambda_1} \zeta(x,\omega), \qquad 
\zeta(x,\omega):= \sum_{k=1}^\infty\sqrt{ \frac{\lambda_k}{\lambda_1} }\, \phi_k(x) \xi_k(\omega),
\]
Thus, setting $\varepsilon=O(\ell^{\frac{r}{2}})$ gives rise to $a(\ome,x)=\overline{a} + \varepsilon \zeta$,
which is the desired ``weak form" consisting of a deterministic field plus a small random perturbation. 
%So we just showed that in many cases a given random field $a$ can be rewritten into 
%By applying this  technique to the given random coefficient field $a(x,\omega)$ we can rewrite it into the desired 
%``weak form" $a(x,\omega)=a_0(x)+\veps \eta(x,\omega)$ with $a_0=\overline{a}$ and $\eta=\zeta$. 
Therefore, our multi-modes Monte Carlo finite element method can still be applied to
such random elliptic PDEs in more general form.

It should be pointed out that the classical Karhunen-Lo\`eve expansion may be replaced by other
types of expansion formulas which may result in more efficient multi-modes Monte Carlo methods. 
The feasibility and competitiveness of non-Karhunen-Lo\`eve expansion technique will 
be investigated in forthcoming paper, where comparison among different expansion choices will 
also be studied. Finally, we also remark that the finite element method can be replaced by 
any other space discretization method such as finite difference, discontinuous Galerkin, and
spectral methods in the main algorithm.

\section*{Acknowledgments}
The research of the first author was partially supported by the NSF grant DMS-1318486 
and the research of the second author was supported by the NSF grant DMS-1417676. 

%%%%%%%%%%%%%%%%%%%%%%%%%%%%%

\end{document}